\documentclass[12pt]{article} 
\usepackage{amsmath,amssymb} 
\usepackage{mathtools}
\usepackage{esint}
\usepackage{graphicx}
\usepackage{color}
\usepackage{amsfonts}
\usepackage{epstopdf}
\usepackage{algorithm}
\usepackage{enumitem}
\setlist[itemize]{leftmargin=0.0mm}
\usepackage[noend]{algpseudocode}
\usepackage{diagbox}
\usepackage{amsthm}
\newtheorem{lemma}{Lemma}

\newtheorem{theorem}{Theorem}[section]
\usepackage{footnote}

\graphicspath{ {./pic/} }
\usepackage{hyperref}
\usepackage{authblk}

\title{Partially explicit splitting scheme with explicit-implicit-null method for nonlinear multiscale flow problems }
\author[1]{Yating Wang } 
\author[2]{Wing Tat Leung} 

\affil[1]{School of Mathematics and Statistics, Xi'an Jiaotong University.}
\affil[2]{Department of Mathematics, City University of Hong Kong.}

\begin{document}

\maketitle
\begin{abstract}
In this work, we present an efficient approach to solve nonlinear high-contrast multiscale diffusion problems. We incorporate the explicit-implicit-null (EIN) method to separate the nonlinear term into a linear term and a damping term, and then utilise the implicit and explicit time marching scheme for the two parts respectively. Due to the multiscale property of the linear part, we further introduce a temporal partially explicit splitting scheme and construct suitable multiscale subspaces to speed up the computation. The approximated solution is splitted into these subspaces associated with different physics. The temporal splitting scheme employs implicit discretization in the subspace with small dimension that representing the high-contrast property and uses explicit discretization for the other subspace. We exploit the stability of the proposed scheme and give the condition for the choice of the linear diffusion coefficient. The convergence of the proposed method is provided. Several numerical tests are performed to show the efficiency and accuracy of the proposed approach.
\end{abstract}

\section{Introduction}
Nonlinear partial differential equations with multiscale properties occurs in many science and engineering application, such as compressible flow in porous media, heat conduction in composite materials, water flows in unsaturated soils, etc. Due to the nonlinearity and disparity between scales of the underlying model, it is computational challenging to solve these equations numerically.

For time-dependent problems with multiscale features, the high-contrast ratios between scales require very fine meshes for spatial discretization, this increases the degrees of freedom significantly and leads to heavy computational burden. Furthermore, the time step size for the temporal discretization in the explicit scheme also relies on the highest magnitude of the multiscale coefficient in order to form a stable solver. Though implicit schemes are unconditionally stable, solving nonlinear problems usually suffer from the iterative methods at every time step. Thus it is important to construct efficient and accurate fully discretized solvers for such problems.

Here, we first briefly review existing spatial and temporal discretization methods for linear multiscale time-dependent problems. Many studies on constructing reduced order models have been proposed to enhance the efficiency of spatial approximation, including local \cite{eh09, aarnesMMS, allaire2005multiscale} and global \cite{POD_notes, benner2015survey, nonlinear_deim} model reduction methods. Local approaches usually create computational models on a coarse grid with desired accuracy, to name a few,  multiscale finite element method (MsFEM) \cite{eh09}, generalized multiscale finite element methods (GMsFEM) \cite{GMsFEM13, MixedGMsFEM}, localized orthogonal decomposition methods (LOD) \cite{maalqvist2014localization}, constraint energy minimizing GMsFEM (CEM-GMsFEM) \cite{cem-gmsfem, chung2019correction}, nonlocal multicontinuum method (NLMC) \cite{NLMC}. For time discretization, many works has been proposed to handle multiscale stiff systems \cite{abdulle2012explicit, giraldo2013implicit, savcenco2007multirate, carciopolo2020conservative, cem-splitting, splitting-multirate}. Among which, the partially explicit temporal splitting method\cite{cem-splitting, splitting-multirate} for linear multiscale flow problem is introduced to produce a splitting scheme with a suitable construction of multiscale subspaces based on CEM-GMsFEM such that the scheme is stable and the time step size is independent of the contrast. 

To handle nonlinear problems, one way to incorporate the advantages of implicit and explicit time marching method is the so-called explicit-implicit-null method (EIN) \cite{EIN, duchemin2022mars, shu_ein_ldg}. It is noticed that the implicit time discretization scheme can be efficient for solving diffusion equations with constant coefficients (without multiscale properties). This observation inspires one to introduce a term with constant diffusion coefficient and then substract that term in the original nonlinear equation, while the the implicit time marching is only used for the constant diffusion term (damping term), and the explicit time marching is adopted for the difference between the nonlinear term and the constant diffusion term. It has been shown that the EIN is stable and convergent with a suitable choice of constant diffusion.

In this work, we would like to speed up the simulation of nonlinear multiscale flow problems with the help of EIN. The main idea is to introduce the EIN method to handle nonlinearity. However, the diffusion term we will add and substract in the equation is no longer a constant anymore, but is the dominant multiscale coefficient in order to capture the underlying property. To deal with this issue, we will bring in the partially explicit splitting scheme to handle the linear multiscale part of the equation. The idea is to construct multiscale spaces based on the dominant multiscale coefficient under the framework of CEM-GMsFEM or NLMC, and split the solution into multiscale subspaces associated with different physics. Then one treats the small number of degrees of freedom corresponding to the subspace representing high-contrast in an implicit manner and handles the another subspace explicitly. The stability and convergence of the proposed scheme can be ensured.

The rest of the paper is organized as follows. In Section \ref{sec:prelim}, we describe the problem setup, the fine discretization of the problem, the EIN scheme and the partially explicit splitting scheme with EIN, as well as the fast approximation method for nonlinear terms. The construction of multiscale spaces is presented in Section \ref{sec:spaces}. Then we study the stability and convergence of the proposed method in Section \ref{sec:stability}. The numerical tests are shown in Section \ref{sec:numerical}. In the end, the conclusion is drawn.

\section{Problem Setup}\label{sec:prelim}
Consider the nonlinear parabolic equation 
\begin{equation}\label{eq:strong_model}
\begin{aligned}
    u_t - \nabla \cdot (\kappa(x,u) \nabla u) &= f \;\;\;\; &&\text{ on } \Omega\times (0,T] \\
    u &= u_D \;\;\;\; &&\text{ on }  \Gamma_D \times (0,T]\\
    \nabla u\cdot \boldsymbol{n} &= 0 \;\;\;\; &&\text{ on }  \Gamma_N \times (0,T]\\
    u &= u_0  \;\;\;\; &&\text{ on } \partial \Omega \times \{0\}
\end{aligned}
\end{equation}
where $\Omega\in \mathbb{R}^d$ is the computational domain, the boundary of the domain $\partial \Omega  = \Gamma_D \cup \Gamma_N$, $\boldsymbol{n}$ is the unit outward normal vector on $\partial \Omega$. We assume the nonlinear diffusion $\kappa(x,u) = \kappa_x(x) \kappa_u(u)$, where $\kappa_x \in L^{\infty}(\Omega)$ is a heterogeneous coefficient with high contrast, that is, the value of the conductivity/permeability in different regions of $\kappa$ can differ in magnitudes, $\kappa_u(u) > 0$ and is bounded and Lipchitz continuous. Denote by the inner product and the norm in $L_2$ by $(\cdot, \cdot)$ and $\|\cdot\|$, respectively.

 
The weak form of the problem is to seek $u(t,\cdot) \in V=\{ u\in H^1(\Omega), u=u_D \text{ on } \Gamma_D\}$ such that
\begin{equation}\label{eq:weak_eq}
    (u_t, v) + a(u;u,v) = (f,v) , \quad \forall v\in V, \quad t\in(0,T]
\end{equation}
with initial condition $u(0)= u_0$ and $a(w;u,v)=\int_{\Omega}\kappa_{x}\kappa_{u}(w)\nabla u\cdot\nabla v$. 

\subsection{Fine and coarse scale approximation}
Let $\mathcal{T}^H$ be a coarse spatial partition of the computational domain $\Omega$, and its uniform refinement $\mathcal{T}^h$ be a fine spatial partition, where $h\ll H$. Let $V_h$ be the linear finite element space with respect to the grid $\mathcal{T}^h$, and $V_H$ be the multiscale space associated with $\mathcal{T}^H$. The construction of the multiscale basis functions will be discussed later.

The full discretization with finite element in space and implicit Euler in time for \eqref{eq:strong_model} is to find $u_h^{n+1}\in V_h$ such that 
\begin{equation}\label{eq:fine_implicit}
   \frac{1}{\Delta t} {(u_h^{n+1}-u_h^{n}, v)}   + (\kappa(x, u_h^{n+1}) \nabla u_h^{n+1}, \nabla v) = (f,v) , \quad \forall v \in V_h
\end{equation}
where $\Delta t$ is the time step size. The above nonlinear problem can be solved using a Newton-Raphson iteration. That is, given $u_h^n$ at each timestep, define the residual in terms of a basis function $\phi_i$ at the $k$-th iteration,
\begin{equation}\label{eq:residual}
    R(u_h^{n,k}, \phi_i) = (u_h^{n,k} - u_h^n,   \phi_i) +\Delta t (\kappa(x, u_h^{n,k}) \nabla u_h^{n,k}, \nabla \phi_i) -\Delta t(f,\phi_i).
\end{equation}
Let $J$ be the Jacobian matrix of the residual $R$, then the Newton-Raphson approach is to solve a linearized problem iteratively,
\begin{equation}\label{eq:NR}
    \sum_i J(u_h^{n,k}, \phi_i) \delta_i =  -R(u_h^{n,k}, \phi_i), \quad \forall \; i
\end{equation}
where $\delta_i$ is the $i$-th coefficient of the update $\delta = \sum_i \delta_i \phi_i$, and $u_h^{n,k+1} = u_h^{n,k} + \delta$. The iterations are stopped at each timestep when the norm of the update is within a small tolerance, and $u_h^{n+1}$ will take the solution at the last Newton-Raphson iteration. 


To improve the computational efficiency for the multiscale problem, a coarse scale approximation is needed. For the nonlinear problem considered in this work, one approach is to construct appropriate multiscale space $V_H$ associated with coarse grid mesh $\mathcal{T}^H$, and utilize the reduced dimensional space for spatial discretization and still adopt the implicit Euler method with newton type iterations for temporal discretization. In this work, the coarse scale space will be computed under the framework of constraint energy minimizing generalized multiscale finite element method (CEM-GMsFEM) and nonlocal multicontinuum upscaling method (NLMC). The details of these methods will be presented in Section \ref{sec:spaces}. With the constructed reduced dimensional space $V_H$, the coarse scale approximation is to find $u_H^{n+1}\in V_H$ such that 
\begin{equation}\label{eq:fine_weak}
   \frac{1}{\Delta t} {((u_H^{n+1}, v)-(u_H^{n}, v))}   + (\kappa(x, u_H^{n+1}) \nabla u_H^{n+1}, \nabla v) = (f,v) , \quad \forall v \in V_H.
\end{equation}
The above nonlinear system can still be solved with the Newton-Raphson approach \cite{poveda-compressible, yang-poddiem}.

\subsection{Explicit-Implicit-Null (EIN) approach}
To avoid the newton iterations in each time step, the Explicit-Implicit-Null method was proposed \cite{EIN, duchemin2022mars, shu_ein_ldg}. The idea is to add and subtract a term with constant diffusion coefficient $\nabla \cdot (\kappa_0 \nabla u)$ in the nonlinear equation, and then apply implicit time marching for the added linear term, and explicit time marching for the rest. To be specific, EIN solves the following 
\begin{equation}\label{eq:fine_ein}
\begin{aligned}
       &\frac{1}{\Delta t} {((u^{n+1}, v)-(u^{n}, v))}  + \underbrace{(\kappa_0 \nabla u^{n+1}, \nabla v)}_{(I)}  \\
       &+ \underbrace{(\kappa(u^{n}) \nabla u^{n}, \nabla v) -(\kappa_0 \nabla u^{n}, \nabla v)}_{(II)}   = (f,v), 
\end{aligned}
\end{equation}
for all test function $v$, and $\kappa_0$ is a chosen constant coefficient such that the stability of the scheme is ensured.

\subsection{Partially explicit splitting scheme with EIN }

In our work, we adopt the EIN idea to handle the non-linear problem \eqref{eq:strong_model} with coefficient $\kappa(x,u)$. The diffusion coefficient $\kappa_0$ in \eqref{eq:fine_ein} was chosen to be a fixed heterogeneous field which dominates the diffusion and does not change with time. However, after the EIN procedure, if we directly apply the implicit time marching on term (I) in \eqref{eq:fine_ein}, the computational cost could still be high due to the high heterogeneity if $\kappa_0$. To avoid this issue, we further introduce a multiscale splitting scheme to improve the efficiency.

We recall that, for the linear time-dependent diffusion problem with high-contrast coefficients under suitable boundary and initial conditions, 
\begin{equation*}
    u_t - \nabla \cdot (\kappa_0 \nabla u) = f,
\end{equation*}
a temporal splitting method combined with appropriate spatial multiscale method \cite{cem-splitting} was proposed recently to produce a contrast-independent partially explicit time discretization scheme. 
The crucial ingredient of this approach lies in the construction of multiscale subspaces which guarantees the stability of implicit-explicit scheme and the time step size independent of the contrast. To be specific, suppose the multiscale space $V_H$ can be decomposed into two subspaces
\begin{equation*}
    V_H = V_{H,1} + V_{H,2}.
\end{equation*}
In \cite{cem-splitting}, the whole space $V_H$ contains basis functions computed using CEM-GMsFEM \cite{cem-gmsfem}. Then the dominant basis functions representing the high-contrast features have very few degrees of freedom and will form the subspace $V_{H,1}$. The additional complementary basis functions will form the subspace $V_{H,2}$ and lead to a contrast-independent condition for the explicit part of the discretization. Then the partially explicit temporal splitting scheme is to find $u_{H,1}^{k} \in V_{H,1}$ and $u_{H,2}^{k} \in V_{H,2}$, 
\begin{equation} \label{eq:partial_exp}
\begin{aligned}
    \left( \frac{u_{H,1}^{k+1} -u_{H,1}^k }{\Delta t}, v_1 \right) + \left( \frac{u_{H,2}^{k} -u_{H,2}^{k-1} }{\Delta t }, v_1 \right) + (\kappa_0 \nabla (u_{H,1}^{k+1} +u_{H,2}^{k}), \nabla v_1) &= (f_1^k, v_1),\\
        \left( \frac{u_{H,2}^{k+1} -u_{H,2}^k }{\Delta t}, v_2 \right) +\left( \frac{u_{H,1}^{k} -u_{H,1}^{k-1} }{\Delta t}, v_2 \right) + (\kappa_0  \nabla(u_{H,1}^{k+1} +u_{H,2}^{k}), \nabla v_2) &= (f_2^k, v_2), 
\end{aligned} 
\end{equation}
for all $k$ and $\forall v_1 \in V_{H,1}, \forall v_2 \in V_{H,2}$. The two sub-equations will be solved sequentially and the solution at time step $n+1$ will be $u_H^{n+1} = u_{H,1}^{n+1} + u_{H,2}^{n+1}$.

To construct multiscale basis functions associated with physics and bring in local adaptivity for accurate and efficient solution approximations, a multirate method was introduced in \cite{splitting-multirate}. In this framework, the two physically meaningful multiscale subspaces are constructed to represent the underlying features of the solution in the corresponding spatial regions, and the time step sizes in two sub-equations can be different in the splitting scheme. 

In this work, we will incorporate EIN with the above-mentioned partially explicit scheme for nonlinear problems.
\begin{equation*}
    u_t - \nabla \cdot (\kappa(x,u) \nabla u) = f,
\end{equation*}
where $\kappa(x,u) = \kappa_x(x)\kappa_u(u)$, $\kappa_{x}\in L^{\infty}$
is a high-contrast media and $\kappa_{u}$ is a positive function. 
Let $\kappa_{0}(\cdot):=\kappa_{x}(\cdot)\kappa_{u}(\tilde{u}(\cdot))$ for a given $\tilde{u}\in H^{1}$.

The idea is to solve the following EIN-splitting scheme:

		\begin{equation} \label{eq:partial_ein_exp1}
		\begin{aligned} 
			& \frac{1}{\Delta t}(u_{H,1}^{k+1} -u_{H,1}^k, v_1) +\frac{1}{\Delta t } (u_{H,2}^{k} -u_{H,2}^{k-1}, v_1)  + (\kappa_0  \nabla (u_{H,1}^{k+1} +u_{H,2}^{k}), \nabla v_1)  \\
			& + (\kappa(x, u_{H,1}^{k} +u_{H,2}^{k}) \nabla (u_{H,1}^{k} +u_{H,2}^{k}), \nabla v_1) - (\kappa_0  \nabla(u_{H,1}^{k} +u_{H,2}^{k}), \nabla v_1) = (f_1^{k},v_1),
		\end{aligned} 
	\end{equation}
    \begin{equation}\label{eq:partial_ein_exp2}
		\begin{aligned}
        &\frac{1}{\Delta t} (u_{H,2}^{k+1} -u_{H,2}^k ,v_2) +\frac{1}{\Delta t}(u_{H,1}^{k} -u_{H,1}^{k-1}, v_2)+ (\kappa_0 \nabla (u_{H,1}^{k} +u_{H,2}^{k}), \nabla v_2) \\
		& + (\kappa(x, u_{H,1}^{k} +u_{H,2}^{k})\nabla (u_{H,1}^{k} +u_{H,2}^{k}),\nabla v_2)- (\kappa_0 \nabla (u_{H,1}^{k+1} +u_{H,2}^{k}),\nabla v_2) = (f_2^{k},v_2),
		\end{aligned} 
    \end{equation}
for all $v_1 \in V_{H,1}$ and $v_2 \in V_{H,2}$.

We remark that, with the help of EIN, we can avoid the iterative procedure for the nonlinear problem. The partially explicit scheme can further reduce the computational cost by splitting the multiscale space $V_{H}$ into two subspaces, and lead to a contrast-independent stability condition for the temporal discretization. 

\subsection{Approximation with proper orthogonal decomposition (POD) and discrete empirical interpolation method (DEIM)}

Note that we have to assemble matrices with respect to the term nonlinear term $(\kappa(x, u)\nabla u, \nabla v)$ appeared in all the above mentioned schemes on-the-fly and it could not be precomputed. This could be time consuming.

In this section, we introduce the DEIM approach to approximate 
nonlinear functions by combining projection with interpolation.

We first briefly introduce the idea of DEIM. Let $f(\mu)$ be a nonlinear function with parameter $\mu$. Suppose $U=[v_1, \cdots, v_m] \in \mathbb{R}^{n\times m}$ is a matrix consisting of a set of $m$ basis which will span a subspace we would like to project $f$ onto. 

To get the projection basis $\{u_1, \cdots, u_m\}$, one can apply the proper orthogonal decomposition (POD) technique on some snapshots $\{f(\mu_1), \cdots f(\mu_N)\}$ which computed from the full order model. The idea of POD is to find an orthonormal set of basis $\{v_1, \cdots v_m\}$ where $m << N$ to minimize the quantity
\begin{equation*}
    \sum_{i=1}^{N} \lVert f(\mu_i) - \sum_{j=1}^{m} (f(\mu_j)^T u_j )u_j\rVert ^2_2.
\end{equation*}

This is achieved by the procedures summarized as follows.

Let $Y = [f(\mu_1), \cdots f(\mu_N)]$, and the SVD of $Y$ is $Y=V\Sigma W^T$ where $\Sigma = diag(\sigma_1, \cdots, \sigma_r)$ and $\sigma_i$ are placed in a descending order, $r$ is the rank of the $Y$. Then the POD basis $\{u_i\}$ are chosen to be the first $m$ column vectors in the left matrix $V$. 
Define the fractional energy as $E = \frac{\sum_{i=1}^m \sigma_i}{\sum_{i=1}^r \sigma_i}$,
then the number of basis $r$ is selected such that the fractional energy reaches a threshold $E=\epsilon_E$, $0.9<\epsilon_E<1$.

With the POD basis $U$, we can approximate $f$ by 
\begin{equation}\label{eq:deim_cmu}
    f(\mu) \approx U c(\mu),
\end{equation}
 where $c(\mu)$ is the coefficient vector. Define a matrix
\[ \mathcal{P}=[e_{P_1}, \cdots, e_{P_m}] \in \mathbb{R}^{n\times m}\]
where $e_{P_i}$ is the $P_i$-th column of the n-th order identity matrix, and $P_1, \cdots, P_m$ are the interpolation indices. The DEIM algorithm finds a set of indices $P_1, \cdots, P_m$ given the basis $U$ via a greedy approach. The first index $P_1$ corresponds to the entry where $u_1$ has the largest maginitude. The rest of the indices $P_i (i\geq 2)$ are chosen such that the $P_i$-th component of the residual $v_i -Wc$ is the largest, where $c$ is computed from $( \mathcal{Q}^T W)c =  \mathcal{Q}^T v_i$, and $W$ is matrix consisting of the first $i-1$ basis in $U$, $Q$ is the matrix consisting of the normal vectors $\{e_{P_1}, \cdots, e_{P_{i-1}}\}$. 

Finally, the vector $c(\mu)$ in \eqref{eq:deim_cmu} can be found from $\mathcal{P}^T f(\mu) = (\mathcal{P}^TU) c(\mu)$,
thus 
\begin{equation*}
\begin{aligned}
c(\mu) = (\mathcal{P}^T U)^{-1}  \mathcal{P}^T f(\mu),\;\;\; f(\mu) \approx  U(\mathcal{P}^T U)^{-1}  \mathcal{P}^T f(\mu)
\end{aligned}
\end{equation*}

In this work, we approximate $\kappa(x,u)$ by the DEIM-POD method. That is, $\kappa(x, u) \approx \sum_{i=k}^m c_k v_k$,
where $c_k$ are coefficients to be determined online (at each time step), $v_k$ are precomputed basis functions.

Let $[A_f(v_k)]_{ij} = \int_{\Omega} v_k \nabla \phi_i \cdot \nabla \phi_j$ be the fine scale stiffness matrix associated with each $v_k (k=1,\cdots,m)$, they can be computed offline. Then the fine scale stiffness matrix at the online stage can be assembled using
\begin{equation*}
   A_f(\kappa(x,u)) =  \sum_{k=1}^m c_k  A_f(v_k)
\end{equation*}
where $\textbf{c} = (\mathcal{P}^T U)^{-1}  \mathcal{P}^T \kappa(u)$.

Now, let $\Psi_1 \in \mathbb{R}^{D\times d_1}$ and $\Psi_2 \in \mathbb{R}^{D\times d_2}$ be the matrices whose columns are the bases of $V_{H,1}$, $V_{H,2}$, respectively. Then one can precompute the folloiwng coarse scale matrices 
\begin{equation*}
\begin{aligned}
 A_{H,1}(v_k) = \Psi_1^T A_f(v_k) \Psi_1, \quad  A_{H,2}(v_k) = \Psi_2^T A_f(v_k) \Psi_2, \quad A_{H,12} = \Psi_1^T A_f(v_k) \Psi_2,
\end{aligned}
\end{equation*}
for all $k=1,\cdots,m$. 

The online coarse scale matrices can be computed at a similar effective manner 
\begin{equation*}
\begin{aligned}
    & A_{H,1}(\kappa(x,u)) =  \sum_{k=1}^m c_k A_{H,1}(v_k), \quad A_{H,2}(\kappa(x,u)) =  \sum_{k=1}^m c_k A_{H,2}(v_k),  \\ 
    & A_{H,12}(\kappa(x,u)) =  \sum_{k=1}^m c_k A_{H,12}(v_k).
\end{aligned}
\end{equation*}

\section{Construction of multiscale spaces}\label{sec:spaces}
In this section, we will present the construction of multiscale spaces. 

Let $\{K_i\} ( i = 1, \cdots, N_c)$ be the set of coarse blocks associated with coarse mesh $\mathcal{T}^H$. Denote by $K_i^+$ the oversampled region for $K_i$, and $K_i^+$ consists of $K_i$ as well as a few layers of coarse blocks neighboring $K_i$. We write $V(K_i)$ as the restriction of $V=H_0^1(\Omega)$ on $K_i$.

The idea of NLMC is to first construct auxiliary spaces $V_{\text{aux},1}$ and $V_{\text{aux},2}$, and then solve some localized energy minimizing problems to get multiscale basis and form the multiscale subspaces $V_{H,1}$ and $V_{H,2}$. The auxiliary spaces in NLMC are simplified, which saves computational efforts. However, to improve approximation accuracy, one may adopt an enriched version of NLMC.

\subsection{Construction of basis based on NLMC} \label{sec:V2}
In many applications, the configuration of $\kappa$ such as the locations of the fractures or channels are known, and the magnitude of $\kappa$ in the background and in the fractures are constants. That is, denote by the computational domain $\Omega = \Omega_m \cup \Omega_f$, where the subscripts $m$ and $f$ denote the matrix and fractures, respectively, and $\kappa|_{\Omega_m} \ll \kappa|_{\Omega_f}$. Under this assumption, one can simplify the construction of the basis functions via the NLMC approach \cite{NLMC, zhao2020analysis}.

To be specific, given a coarse block, denote by $K_i =K_{i,f} \cup K_{i,m}$, where $K_{i,f} :=  \{f_j^{(i)}, \; j = 1, \cdots, m_i\}$ is the high-contrast channelized region, $m_i$ is the number of non-connected fractures in $K_i$, and $K_{i,m}$ is its complement in $K_i$. The two simplified auxiliary spaces are
\begin{equation}\label{eq:nlmc_aux}
\begin{aligned}
V_{\text{aux},1}^{(i)} & = \text{span}\{\phi_{\text{aux},k}^{(i)}\; | \phi_{\text{aux},k}^{(i)} = 0 \text{ in } K_{i,m}, \; \phi_{\text{aux},k}^{(i)} = \delta_{jk} \text{ in } f_j^{(i)},  \;\; k =1, \cdots, m_i\}\\
V_{\text{aux},2}^{(i)} & = \text{span}\{\phi_{\text{aux},0}^{(i)}\; |
\phi_{\text{aux},0}^{(i)} = 1 \text{ in } K_{i,m}, \;
\phi_{\text{aux},0}^{(i)} = 0 \text{ in } K_{i,f}\}
\end{aligned}
\end{equation}

Then, we define $V_{\text{aux},1}^{(i)} :=  \text{span} \{\phi_{\text{aux},1}^{(ik)}, \quad 1 \leq k \leq l_{i,1}\}$, where $1\leq i \leq N_c$ and $N_c$ is the number of coarse blocks. Then the global auxiliary space $V_{\text{aux}} = \bigoplus_{i} V_{\text{aux}}^{(i)}$.




Then the NLMC basis are obtained by finding $\psi_m^{(i)} \in V_0(K_i^+)$ and $\mu_0^{(j)}, \mu_n^{(j)} \in \mathbb{R}$ from 
the following localized constraint energy minimizing problem
\begin{equation}\label{eq:basis}
\begin{aligned}
&a(\psi_m^{(i)}, v) + \sum_{K_j \subset K_i^+} \left(\mu_0^{(j)} \int_{K_{j,m}}v  + \sum_{1 \leq n \leq m_j} \mu_n^{(j)} \int_{ f_n^{(j)}} v \right) = 0, \quad   \forall v\in V_0(K_i^+), \\
&\int_{K_{j,m}}\psi_m^{(i)}    = \delta_{ij} \delta_{m0}, \quad   \forall K_j \subset K_i^+, \\
&\int_{f_n^{(j)}} \psi_m^{(i)}    = \delta_{ij} \delta_{mn}, \quad   \forall f_n^{(j)}, \;  \forall K_j \subset K_i^+.
\end{aligned}
\end{equation}
  The NLMC basis functions are then $ \{ \psi_m^{(i)}, \; 0 \leq m \leq  m_i, 1 \leq i \leq N_c\}$. We remark that the resulting basis separates the matrix and fractures automatically, and have spatial decay property \cite{cem-gmsfem, NLMC, zhao2020analysis}. Furthermore, since the local auxiliary basis are constants within fractures and the matrix, the solution variables obtained from the upscaled equation are physically meaningful, i.e, they denote the solution averages in the fracture and matrix continuum within each coarse region.
  
Finally, we divide the NLMC space into two subspaces 
\begin{equation*}
    \begin{aligned}
        V_{H,1} &= \text{span}\{ \psi_m^{(i)}, \; 1 \leq m \leq  m_i, 1 \leq i \leq N_c\},\\
        V_{H,2} &= \text{span}\{ \psi_0^{(i)}, \; 1 \leq i \leq N_c\}.
    \end{aligned}
\end{equation*}




\subsection{Construction of basis based on enriched NLMC (ENLMC)}

To improve approximation accuracy, we introduce an enriched version of NLMC in this section.
To construct the first auxiliary space $V_{\text{aux},1}$ which can capture the heterogeneity, especially high-constrast channels in the underlying media, one can solve the following spectral problem on $K_{i,f}$ of each coarse block $K_i$
\begin{equation}\label{eq:spectral1}
\int_{K_{i,f}} \kappa_0  \nabla \phi_{\text{aux},1}^{(ik)} \cdot \nabla v = \lambda_k^{(i)} \int_{K_{i,f}} \phi_{\text{aux},1}^{(ik)} v, 
\end{equation}
$\forall v \in V(K_{i,f})$, where $\lambda_k^{(i)} \in \mathbb{R}$ and $\phi_{\text{aux},k}^{(i)} \in V(K_i)$ are eigenvalues and eigenfuncitons of the generalized spectral problem. 

After solving the spectral problem, one can list the eigenvalues of \eqref{eq:spectral1} in an ascending order, and choose the first $l_{i,1}$ eigenfunctions $\phi_{\text{aux},1}^{(ik)}$ to form the auxiliary space. 

Then, we define $V_{\text{aux},1}^{(i)} :=  \text{span} \{\phi_{\text{aux},1}^{(ik)}, \quad 1 \leq k \leq l_{i,1}\}$, where $1\leq i \leq N_c$ and $N_c$ is the number of coarse blocks. Then the global auxiliary space $V_{\text{aux}} = \bigoplus_{i} V_{\text{aux}}^{(i)}$.

For the second auxiliary space, we solve the following:
\begin{equation}\label{eq:spectral2}
\int_{K_{i,m}} \kappa_0 \nabla  \phi_{\text{aux},2}^{(ik)}  \cdot \nabla v= \varrho_k^{(i)} \int_{K_{i,m}}    \phi_{\text{aux},2}^{(ik)}  v.
\end{equation}
Similarly, arrange the eigenvalues $\varrho_k^{(i)}$ of \eqref{eq:spectral2} in ascending order and take the first $l_{i,2}$ eigenfunctions, and we will restrict these spectral basis in the region $K_{i,m}$ to form $V_{\text{aux},2}$,
\begin{equation*}
    V_{\text{aux},2}^{(i)} = \text{span} \{\phi_{\text{aux},2}^{(ik)}, \quad 1 \leq k \leq l_{i,2}\}
\end{equation*}
 where $1\leq i \leq N_c$. The auxiliary space $V_{\text{aux},2} = \bigoplus_{i} V_{\text{aux}, 2}^{i}$.

After obtaining the auxiliary spaces, we solve the following constraint energy minimizing problem on a localized region $K_i^+$ to get the multiscale basis $\psi_{H,1}^{ij}$, 
\begin{equation}\label{eq:specbasis}
\begin{aligned}
 a(\psi_{H,1}^{ij}, w) +  s_f(w, \mu_1^{ij}) +  s_m(w, \mu_2^{ij})&= 0, \quad &&\forall w \in V(K_i^+),\\
s_f(\psi_{H,1}^{ij}, \eta ) &=  s_f(\phi_{\text{aux},1}^{ij} , \eta),   \quad &&\forall \eta \in V_{\text{aux},1}^{(i)},\\
s_m(\psi_{H,1}^{ij}, \nu) &=  0,   \quad &&\forall \nu  \in V_{\text{aux},2}^{(i)}. 
\end{aligned}
\end{equation}
where $\phi_{\text{aux},1}^{(ij)} \in  V_{\text{aux},1}^{(i)}$ is the $j$-th auxiliary basis on $K_i$, $a_f(u,v) = \int_{K_{i,f}} \kappa_0 \nabla u \cdot \nabla v$, $s_f(u,v) = \int_{K_{i,f}}  u  v$,$a_m(u,v) = \int_{K_{i,m}} \kappa \nabla u \cdot \nabla v$  $s_m(u,v) = \int_{K_{i,m}}  u  v$.

The multiscale space then forms our first subspace 
$$V_{H,1} := \text{span} \{\psi_{H,1}^{ij}, \; 1\leq j \leq  l_{i,1}, 1 \leq i \leq N_c\}$$.

To construct the subspace for $V_{H,2}$, for each auxiliary basis $\phi_{\text{aux},2}^{(ij)} \in  V_{\text{aux},2}^{i,(0)}$, we find $\psi_{H,2}^{ij}\in V(K_i^+)$, $\mu_1^{ij} \in V_{\text{aux},1}^{i}$, $\mu_2^{ij} \in V_{\text{aux},2}^{i}$ such that
\begin{equation}\label{eq:specbasis2}
\begin{aligned}
 a(\psi_{H,2}^{ij}, w) +  s_f(w, \mu_1^{ij}) + s_m(w, \mu_2^{ij})&= 0, \quad &&\forall w \in V(K_i^+),\\
 s_f(\psi_{H,2}^{ij}, \eta) &=  0,   \quad &&\forall \eta \in V_{\text{aux},1}^{(i)},\\
s_m(\psi_{H,2}^{ij}, \nu) &=  s_m(\phi_{\text{aux},2}^{ij} , \nu),   \quad &&\forall \nu \in V_{\text{aux},2}^{(i)}.
\end{aligned}
\end{equation}
Then the second multiscale space is defined as
\begin{align*}
    V_{H,2} &= \text{span} \{\psi_{H,2}^{ij}, \quad 1 \leq j \leq l_{i,2}, \;\; 1\leq i \leq N_c\}
\end{align*}


\section{Stability and convergence} \label{sec:stability}
\subsection{Stability}
In this section, we will analysis the stability of the proposed scheme \eqref{eq:partial_ein_exp1}-\eqref{eq:partial_ein_exp2}. 
%
We assume $\kappa_{u}=\kappa_{x}(x)\kappa_{u}(u)$ is Lipschitz continuous and 
\begin{align*}
\kappa_{u}(w)-\kappa_{u}(v))  \leq C_{u'}|w-v|,\;\forall w,v\in\mathbb{R},\;\;
\kappa_{u}(v)  \leq C_{u}\;\forall v\in\mathbb{R}.
\end{align*}

Let $a_{0}$ and $\tilde{a}$ be 
\begin{align*}
a_{0}(u,v)  =\int_{\Omega}\kappa_{0}\nabla u\cdot\nabla v,\;\;
\tilde{a}(w;u,v)  =a(w;u,v)-a_{0}(u,v)
\end{align*}
where $\kappa_{0}(\cdot):=\kappa_{x}(\cdot)\kappa_{u}(\tilde{u}(\cdot))$ for a given
$\tilde{u}\in H^{1}$.

Define a norm $\|\cdot\|_{\kappa}$ as $\|u\|_{\kappa}:=\Big(\int_{\Omega}\kappa_{0}|\nabla u|^{2}\Big)^{\frac{1}{2}}.$

Let $V_{H}=V_{H,1}\oplus V_{H,2}$ and $u_{H}^{n}=u_{H,1}^{n}+u_{H,2}^{n}$ be the solution of
\begin{equation}\label{eq:ein_scheme}
\begin{aligned}
(\cfrac{u_{H,1}^{n+1}-u_{H,1}^{n}+u_{H,2}^{n}-u_{H,2}^{n-1}}{\Delta t},v_1)+a_{0}(u_{H,1}^{n+1}+u_{H,2}^{n},v_1)+\tilde{a}(u_{H}^{n};u_{H}^{n},v_1) & =(f^{n},v_1),\\
(\cfrac{u_{H,2}^{n+1}-u_{H,2}^{n}+u_{H,1}^{n}-u_{H,1}^{n-1}}{\Delta t},v_2)+a_{0}(u_{H,1}^{n+1}+u_{H,2}^{n},v_2)+\tilde{a}(u_{H}^{n};u_{H}^{n},v_2) & =(f^{n},v_2).
\end{aligned}
\end{equation}
where $\forall v_1\in V_{H,1}, \forall v_2\in V_{H,2}$. We have the following stability results.
\begin{lemma}\label{lemma1}
Suppose $C_{1}:=\min_{1\leq n\leq N}\cfrac{\kappa_{u}(u_{H}^{n})-\kappa_{u}(\tilde{u})}{\kappa_{u}(\tilde{u})}<1$, $\|\kappa_{x}^{\frac{1}{2}}\nabla u_{H}^{n+1}\|_{L^{\infty}}<C_{0}$
for all $n$ and $\sup_{v_{2}\in V_{H,2}\backslash\{0\}}\cfrac{\|v_{2}\|_{\kappa}^{2}}{\|v_{2}\|^{2}}\leq\cfrac{(1-\gamma)(1-C_{1})}{\Delta t}$.
We have 
\begin{align*}
 & \Delta t^{-1}\sum_{i=1,2}\cfrac{\gamma}{2}\|(\delta u_{H,i})^{n+1}\|^{2}+\Big(1-\cfrac{D^{2}\Delta t}{2(1-\gamma)}\Big)\int_{\Omega}\cfrac{\kappa_{x}\kappa_{u}(u_{H}^{n+1})}{2}|\nabla u_{H}^{n+1}|^{2}\\
\leq & \Delta t^{-1}\sum_{i=1,2}\cfrac{\gamma}{2}\|(\delta u_{H,i})^{n}\|^{2}+\int_{\Omega}\cfrac{\kappa_{x}\kappa_{u}(u_{H}^{n})}{2}|\nabla u_{H}^{n}|^{2}+(f^{n},(\delta u_{H})^{n+1}).
\end{align*}
Moreover, 
\begin{align*}
&\max_{n}\Big(\Delta t^{-1}\sum_{i=1,2}\cfrac{\gamma}{2}\|(\delta u_{H,i})^{n}\|^{2}+\int_{\Omega}\cfrac{\kappa_{x}\kappa_{u}(u_{H}^{n})}{2}|\nabla u_{H}^{n}|^{2}\Big) \\
\leq & e^{\frac{D^{2}T}{2(1-\gamma)}}\Big(C_{T}\|f^{n}\|_{L^{\infty}}^{2}+\Delta t^{-1}\sum_{i=1,2}\cfrac{\gamma}{2}\|(\delta u_{H,i})^{0}\|^{2}+\int_{\Omega}\cfrac{\kappa_{x}\kappa_{u}(u_{H}^{0})}{2}|\nabla u_{H}^{0}|^{2}\Big).
\end{align*}
\end{lemma}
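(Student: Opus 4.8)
The plan is to run a discrete energy argument: test the two equations of \eqref{eq:ein_scheme} against the temporal increments, add them, and massage the resulting identity into the one-step energy inequality, after which discrete Gr\"onwall yields the maximum-in-time bound. Throughout I write $p=(\delta u_{H,1})^{n+1}=u_{H,1}^{n+1}-u_{H,1}^{n}$, $q=(\delta u_{H,2})^{n+1}$, so that $(\delta u_H)^{n+1}=p+q$, and I use $\kappa_0=\kappa_x\kappa_u(\tilde u)$ together with $\tilde a=a-a_0$. First I would choose $v_1=p$ in the first equation and $v_2=q$ in the second, and add; the right-hand side collapses to $(f^n,(\delta u_H)^{n+1})$, the forcing term in the statement. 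For the stiffness contributions I would use the algebraic identity $u_{H,1}^{n+1}+u_{H,2}^{n}-u_H^{n}=p$ to rewrite $a_0(u_{H,1}^{n+1}+u_{H,2}^{n},\cdot)+\tilde a(u_H^n;u_H^n,\cdot)$, evaluated at $(\delta u_H)^{n+1}$, as $a_0(p,(\delta u_H)^{n+1})+a(u_H^n;u_H^n,(\delta u_H)^{n+1})$. The second piece is then opened by the polarization identity $b\cdot(b'-b)=\tfrac12(|b'|^2-|b|^2-|b'-b|^2)$ with weight $\kappa_x\kappa_u(u_H^n)$, which produces exactly the energy difference $\int\tfrac{\kappa_x\kappa_u(u_H^n)}{2}(|\nabla u_H^{n+1}|^2-|\nabla u_H^n|^2)$ together with the negative remainder $-\tfrac12\int\kappa_x\kappa_u(u_H^n)|\nabla(\delta u_H)^{n+1}|^2$.

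Two adjustments then convert this identity into the stated inequality. First, the energy at level $n+1$ carries the wrong weight $\kappa_u(u_H^n)$; replacing it by $\kappa_u(u_H^{n+1})$ costs $\tfrac12\int\kappa_x(\kappa_u(u_H^n)-\kappa_u(u_H^{n+1}))|\nabla u_H^{n+1}|^2$, which I would control by the Lipschitz bound on $\kappa_u$ and the uniform bound $\|\kappa_x^{1/2}\nabla u_H^{n+1}\|_{L^\infty}<C_0$, followed by Young's inequality. Splitting the resulting product so that one part is absorbed by the $L^2$ increment terms and the other is proportional to $\Delta t$ times the $(n+1)$ energy is what generates the factor $1-\tfrac{D^2\Delta t}{2(1-\gamma)}$, with $D$ determined by $C_{u'}$, $C_0$ and $\gamma$. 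Second, the time-increment terms expand to $\Delta t^{-1}(\|p\|^2+\|q\|^2+((\delta u_{H,2})^n,p)+((\delta u_{H,1})^n,q))$; here I would invoke the mass-orthogonality of $V_{H,1}$ and $V_{H,2}$ coming from the construction of Section \ref{sec:spaces} so that the two cross terms vanish, leaving the clean $\Delta t^{-1}(\|p\|^2+\|q\|^2)$, of which the fraction $\tfrac{\gamma}{2}\Delta t^{-1}(\|p\|^2+\|q\|^2)$ is set aside for the increment energy and the rest is kept as budget.

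The heart of the argument — and the step I expect to be hardest — is reconciling the negative remainder $-\tfrac12\int\kappa_x\kappa_u(u_H^n)|\nabla(p+q)|^2$ and the surviving stiffness term $a_0(p,p+q)=\|p\|_\kappa^2+a_0(p,q)$ against this budget. Writing $\kappa_x\kappa_u(u_H^n)=\kappa_0(1+\sigma)$ with $\sigma\ge C_1$, the implicit contribution combines with $\|p\|_\kappa^2$ harmlessly, but the explicit contribution carries the elliptic energy $\|q\|_\kappa^2$ of the explicitly treated subspace, for which there is no free counterpart. This is precisely where the spectral (CFL) hypothesis enters: the bound $\|q\|_\kappa^2\le\tfrac{(1-\gamma)(1-C_1)}{\Delta t}\|q\|^2$ converts that explicit elliptic energy into the reserved $L^2$ budget $\Delta t^{-1}\|q\|^2$, the constants $(1-\gamma)$ and $(1-C_1)$ being calibrated to pay simultaneously for the weight ratio $1+\sigma$ and for the $\tfrac{\gamma}{2}$ already committed, while the cross term $a_0(p,q)$ is disposed of by Young's inequality using the remaining slack. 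Getting every constant to close with exactly $(1-\gamma)(1-C_1)$ is the delicate bookkeeping.

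The resulting one-step inequality has the form $(1-\tfrac{D^2\Delta t}{2(1-\gamma)})F^{n+1}\le F^{n}+(f^n,(\delta u_H)^{n+1})$, where $F^n$ is the energy functional in the statement. I would bound the forcing by $(f^n,(\delta u_H)^{n+1})\le\|f^n\|\,(\|p\|+\|q\|)$, use $\|p\|^2+\|q\|^2\le\tfrac{2\Delta t}{\gamma}F^{n+1}$ together with Young to absorb a small multiple of $F^{n+1}$ on the left and leave $C\Delta t\|f^n\|^2$ on the right, and then apply the discrete Gr\"onwall lemma across $n$. Summing $\sum_n\Delta t\|f^n\|_{L^2}^2\le C_T\|f^n\|_{L^\infty}^2$ and collecting the geometric factor $\prod(1-\tfrac{D^2\Delta t}{2(1-\gamma)})^{-1}\le e^{D^2T/(2(1-\gamma))}$ then gives the stated maximum bound.
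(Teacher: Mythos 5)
Your overall architecture matches the paper's proof: test \eqref{eq:ein_scheme} with the increments, use the polarization identity with weight $\kappa_x\kappa_u(u_H^n)$, switch the weight at level $n+1$ via the Lipschitz bound and Young's inequality to produce the factor $1-\frac{D^2\Delta t}{2(1-\gamma)}$, absorb the explicit elliptic energy of the $V_{H,2}$-increment through the CFL hypothesis, and close with discrete Gr\"onwall. Your reorganization of the stiffness terms around $a_0(p,\cdot)+a(u_H^n;u_H^n,\cdot)$ is algebraically equivalent to the paper's $a_0(u_H^{n+1},\cdot)-a_0((\delta u_{H,2})^{n+1},\cdot)+\tilde a(u_H^n;u_H^n,\cdot)$, so that part is fine.

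There is, however, one genuine gap: your treatment of the cross inertia terms $((\delta u_{H,2})^{n},p)$ and $((\delta u_{H,1})^{n},q)$. You assert that these vanish by mass-orthogonality of $V_{H,1}$ and $V_{H,2}$, but the NLMC/ENLMC subspaces of Section~3 are \emph{not} $L^2$-orthogonal --- the constant $\gamma$ in the statement is precisely the strengthened Cauchy--Schwarz constant $|(v_1,v_2)|\le\gamma\|v_1\|\,\|v_2\|$ between the two subspaces, and the paper bounds the cross terms by $\frac{\gamma}{2}\sum_{j=0,1}\sum_{i=1,2}\|(\delta u_{H,i})^{n+j}\|^2$. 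This step is not cosmetic: it is the \emph{source} of the $\frac{\gamma}{2}\|(\delta u_{H,i})^{n}\|^2$ terms on the right-hand side of the one-step inequality, and paying for the cross terms leaves only $(1-\gamma)\Delta t^{-1}\sum_i\|(\delta u_{H,i})^{n+1}\|^2$ of the $L^2$ budget to absorb the explicit elliptic energy --- which is exactly why the CFL threshold is $\frac{(1-\gamma)(1-C_1)}{\Delta t}$ rather than $\frac{1-C_1}{\Delta t}$. If the cross terms truly vanished you would be proving a different (and for these spaces false-premised) statement with $\gamma=0$; as written, your bookkeeping cannot close with the stated constants. A secondary slip: you write $\kappa_x\kappa_u(u_H^n)=\kappa_0(1+\sigma)$ with $\sigma\ge C_1$, whereas the bound actually needed (and used in the paper) is $|\sigma|\le C_1$, giving $2\kappa_0-\kappa^n\ge(1-C_1)\kappa_0$.
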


\begin{proof}

Let $v_1=(\delta u_{H,1})^{n+1}, v_2=(\delta u_{H,2})^{n+1}$ in the two equations of \eqref{eq:ein_scheme} respectively, and
\[
(\delta w)^{n+1}:=(w^{n+1}-w^{n}).
\]
We then obtain 
\begin{align*}
\Delta t^{-1}\Big(\|(\delta u_{H,1})^{n+1}\|^{2}+((\delta u_{H,2})^{n},(\delta u_{H,1})^{n+1})\Big)\\
+a_{0}(u_{H,1}^{n+1}+u_{H,2}^{n},(\delta u_{H,1})^{n+1})+\tilde{a}(u_{H}^{n};u_{H}^{n},(\delta u_{H,1})^{n+1}) & =(f^{n},(\delta u_{H,1})^{n+1}),
\end{align*}
\begin{align*}
\Delta t^{-1}\Big(\|(\delta u_{H,2})^{n+1}\|^{2}+((\delta u_{H,1})^{n},(\delta u_{H,2})^{n+1})\Big)\\
+a_{0}(u_{H,1}^{n+1}+u_{H,2}^{n},(\delta u_{H,2})^{n+1})+\tilde{a}(u_{H}^{n};u_{H}^{n},(\delta u_{H,2})^{n+1}) & =(f^{n},(\delta u_{H,1})^{n+1}).
\end{align*}
Since $|(v_{1},v_{2})|\leq\gamma\|v_{1}\|\|v_{2}\|$ for any $v_{1}\in V_{H,1}, v_{2}\in V_{H,2}$,
we obtain
\begin{align*}
&|((\delta u_{H,2})^{n},(\delta u_{H,1})^{n+1})|+|((\delta u_{H,1})^{n},(\delta u_{H,2})^{n+1})| \\
\leq & \gamma\Big(\|(\delta u_{H,1})^{n+1}\|\|(\delta u_{H,2})^{n}\|+\|(\delta u_{H,2})^{n+1}\|\|(\delta u_{H,1})^{n}\|\Big)\\
\leq & \cfrac{\gamma}{2}\sum_{j=0,1}\sum_{i=1,2}\|(\delta u_{H,i})^{n+j}\|^{2}.
\end{align*}
Therefore, we have 
\begin{equation}\label{eq:est1}
   \begin{aligned}
\Delta t^{-1}\sum_{i=1,2}\Big(\Big(1-\cfrac{\gamma}{2}\Big)\|(\delta u_{H,i})^{n+1}\|^{2}-\cfrac{\gamma}{2}\|(\delta u_{H,i})^{n}\|^{2}\Big)\\
+a_{0}(u_{H,1}^{n+1}+u_{H,2}^{n},(\delta u_{H})^{n+1})+\tilde{a}(u_{H}^{n};u_{H}^{n},(\delta u_{H})^{n+1}) & \leq(f^{n},(\delta u_{H})^{n+1}).
\end{aligned} 
\end{equation}

We denote $\kappa_{x}\kappa_{u}(u_{H}^{n})$ as $\kappa^{n}$ and write
\begin{align*}
 & a_{0}(u_{H,1}^{n+1}+u_{H,2}^{n},(\delta u_{H})^{n+1})+\tilde{a}(u_{H}^{n};u_{H}^{n},(\delta u_{H})^{n+1} )\\
 = &\int_{\Omega}\kappa_{0}\nabla u_{H}^{n+1}\cdot\nabla(\delta u_{H})^{n+1}-\int_{\Omega}\kappa_{0}\nabla(\delta u_{H,2})^{n+1}\cdot\nabla(\delta u_{H})^{n+1} \\
 &+  \int_{\Omega}(\kappa^{n}-\kappa_{0})\nabla u_{H}^{n}\cdot\nabla(\delta u_{H})^{n+1}
\end{align*}
We have 
\begin{align*}
\int_{\Omega}\kappa_{0}\nabla (u_{H}^{n+1}\cdot\nabla(\delta u_{H})^{n+1}= & \int_{\Omega}\cfrac{\kappa_{0}}{2}\Big(|\nabla u_{H}^{n+1}|^{2}-|\nabla u_{H}^{n}|^{2}+|\nabla(\delta u_{H})^{n+1}|^{2}\Big),\\
\int_{\Omega}(\kappa^{n}-\kappa_{0})\nabla u_{H}^{n}\cdot\nabla(\delta u_{H})^{n+1} 
 & =\int_{\Omega}(\cfrac{\kappa^{n}-\kappa_{0}}{2})\Big(|\nabla u_{H}^{n+1}|^{2}-|\nabla u_{H}^{n}|^{2}-|\nabla(\delta u_{H})^{n+1}|^{2}\Big).
\end{align*}
Summing these two equations, we obtain
\begin{align*}
 & \int_{\Omega}\kappa_{0}\nabla u_{H}^{n+1}\cdot\nabla(\delta u_{H})^{n+1}+\int_{\Omega}(\kappa^{n}-\kappa_{0})\nabla u_{H}^{n}\cdot\nabla(\delta u_{H})^{n+1}\\
= & \int_{\Omega}\cfrac{\kappa^{n}}{2}\Big(|\nabla u_{H}^{n+1}|^{2}-|\nabla u_{H}^{n}|^{2}\Big)+\int_{\Omega}\cfrac{2\kappa_{0}-\kappa^{n}}{2}|\nabla(\delta u_{H})^{n+1}|^{2}\Big)
\end{align*}
Since
\begin{align*}
&\int_{\Omega}\cfrac{\kappa^{n}}{2}\Big(|\nabla u_{H}^{n+1}|^{2}-|\nabla u_{H}^{n}|^{2}\Big)\\
 =&\int_{\Omega}\Big(\cfrac{\kappa^{n+1}}{2}|\nabla u_{H}^{n+1}|^{2}-\cfrac{\kappa^{n}}{2}|\nabla u_{H}^{n}|^{2}\Big)+\int_{\Omega}\cfrac{\kappa^{n+1}-\kappa^{n}}{2}|\nabla u_{H}^{n+1}|^{2}.
\end{align*}
and the term $\int_{\Omega}\cfrac{\kappa^{n+1}-\kappa^{n}}{2}|\nabla u_{H}^{n+1}|^{2}$ can be estimated by
\begin{align*}
&\Big|\int_{\Omega}\cfrac{\kappa^{n+1}-\kappa^{n}}{2}|\nabla u_{H}^{n+1}|^{2}\Big|  =\cfrac{1}{2}\Big|\int_{\Omega}\kappa_{x}(\kappa_{u}(u_{H}^{n+1})-\kappa_{u}(u_{H}^{n}))|\nabla u_{H}^{n+1}|^{2}\Big|\\
 \leq & \cfrac{C_{u'}}{2}\int_{\Omega}\kappa_{x}|(\delta u_{H})^{n+1}||\nabla u_{H}^{n+1}|^{2}\leq D\|(\delta u_{H})^{n+1}\|_{L^{2}}\|(\kappa^{n+1})^{\frac{1}{2}}\nabla u_{H}^{n+1}\|_{L^{2}}
\end{align*}
where $D=C_{u'}C_{0}\underline{\kappa}_{u}^{-1}$. 

Putting these estimates in \eqref{eq:est1}, we obtain
\begin{equation}\label{eq:est2}
    \begin{aligned}
 & \Delta t^{-1}\sum_{i=1,2}\Big(\cfrac{\gamma}{2}\|(\delta u_{H,i})^{n+1}\|^{2}+(1-\gamma)\|(\delta u_{H,i})^{n+1}\|^{2}\Big)+\int_{\Omega}\cfrac{\kappa^{n+1}}{2}|\nabla u_{H}^{n+1}|^{2} \\
 &+\int_{\Omega}\cfrac{2\kappa_{0}-\kappa^{n}}{2}|\nabla(\delta u_{H})^{n+1}|^{2}-\int_{\Omega}\Big(\kappa_{0}\nabla(\delta u_{H,2})^{n+1}\Big)\cdot\nabla(\delta u_{H})^{n+1}\\
\leq & \Delta t^{-1}\sum_{i=1,2}\cfrac{\gamma}{2}\|(\delta u_{H,i})^{n}\|^{2}+\int_{\Omega}\cfrac{\kappa^{n}}{2}|\nabla u_{H}^{n}|^{2}+(f^{n},\delta u_{H}^{n+1})+\\
& D\|(\delta u_{H})^{n+1}\|_{L^{2}}\|(\kappa^{n+1})^{\frac{1}{2}}\nabla u_{H}^{n+1}\|_{L^{2}}\\
\leq & \Delta t^{-1}\sum_{i=1,2}\cfrac{\gamma}{2}\|(\delta u_{H,i})^{n}\|^{2}+\int_{\Omega}\cfrac{\kappa^{n}}{2}|\nabla u_{H}^{n}|^{2}+(f^{n},\delta u_{H}^{n+1})+\cfrac{1-\gamma}{2\Delta t}\|(\delta u_{H})^{n+1}\|_{L^{2}}^{2}\\
&+\cfrac{D^{2}\Delta t}{2(1-\gamma)}\int_{\Omega}\cfrac{\kappa^{n+1}}{2}|\nabla u_{H}^{n+1}|^{2}.
\end{aligned}
\end{equation}

Since $|\cfrac{\kappa_{0}-\kappa^{n}}{\kappa_{0}}|\leq C_{1}$, we
have $\cfrac{2\kappa_{0}-\kappa^{n}}{2}\geq\cfrac{1-C_{1}}{2}\kappa_{0}$. Thus
\begin{align*}
 & \int_{\Omega}\cfrac{2\kappa_{0}-\kappa^{n}}{2}|\nabla(\delta u_{H})^{n+1}|^{2}-\int_{\Omega}\Big(\kappa_{0}\nabla(\delta u_{H,2})^{n+1}\Big)\cdot\nabla(\delta u_{H})^{n+1}\\
\geq & \cfrac{-1}{2(1-C_{1})}\int_{\Omega}\kappa_{0}|\nabla(\delta u_{H,2})^{n+1}|^{2}.
\end{align*}

Furthermore, by $\cfrac{\|(\delta u_{H,2})^{n+1}\|_{\kappa}^{2}}{\|(\delta u_{H,2})^{n+1}\|^{2}}\leq\cfrac{(1-\gamma)(1-C_{1})}{\Delta t}$,
we have
\begin{align*}
 & \Delta t^{-1}\sum_{i=1,2}\Big(\cfrac{\gamma}{2}\|(\delta u_{H,i})^{n+1}\|^{2}\Big)+\Big(1-\cfrac{D^{2}\Delta t}{2(1-\gamma)}\Big)\int_{\Omega}\cfrac{\kappa^{n+1}}{2}|\nabla u_{H}^{n+1}|^{2}\\
\leq & \Delta t^{-1}\sum_{i=1,2}\Big(\cfrac{\gamma}{2}\|(\delta u_{H,i})^{n+1}\|^{2}+\cfrac{(1-\gamma)}{2}\|(\delta u_{H,i})^{n+1}\|^{2}\Big) \\
 & +\Big(1-\cfrac{D^2 \Delta t}{2(1-\gamma)}\Big)\int_{\Omega}\cfrac{\kappa^{n+1}}{2}|\nabla u_{H}^{n+1}|^{2}-\cfrac{1}{2(1-C_{1})}\int_{\Omega}\kappa_{0}|\nabla(\delta u_{H,2})^{n+1}|^{2}\\
\leq & \Delta t^{-1}\sum_{i=1,2}\Big(\cfrac{\gamma}{2}\|(\delta u_{H,i})^{n}\|^{2}\Big)+\int_{\Omega}\cfrac{\kappa^{n}}{2}|\nabla u_{H}^{n}|^{2}+(f^{n},(\delta u_{H})^{n+1}),
\end{align*}
where \eqref{eq:est2} is used in the last inequality.

Finally, by Gronwall's inequality, we obtain
\begin{align*}
&\max_{n}\Big(\Delta t^{-1}\sum_{i=1,2}\Big(\cfrac{\gamma}{2}\|(\delta u_{H,i})^{n}\|^{2}\Big)+\int_{\Omega}\cfrac{\kappa_{x}\kappa_{u}(u_{H}^{n})}{2}|\nabla u_{H}^{n}|^{2}\Big) \\
\leq & e^{\frac{D^{2}T}{2(1-\gamma)}}\Big(C_{T}\|f^{n}\|_{L^{\infty}}^{2}+\Delta t^{-1}\sum_{i=1,2}\Big(\cfrac{\gamma}{2}\|(\delta u_{H,i})^{0}\|^{2}\Big)+\int_{\Omega}\cfrac{\kappa_{x}\kappa_{u}(u_{H}^{0})}{2}|\nabla u_{H}^{0}|^{2}\Big).
\end{align*}
This completes the proof.
\end{proof}

\subsection{Convergence}
In this subsection, we will discuss the convergence of the proposed scheme. We first define an elliptic projection operator $P_{H}:H^{1}(\Omega)\rightarrow V_{H}$
such that 
\[
a(P_{H}(v),w)=a(v,w)\;\forall w\in V_{H}.
\]
Since $V_{H}=V_{H,1}+V_{H,2}$, we can introduce $P_{H,i}:H^{1}(\Omega)\rightarrow V_{H,i}$
such that $P_{H}=P_{H,1}+P_{H,2}$.
Let $u^n = u(\cdot, t^n)$. Define some residual terms  as follows, $r_{\delta}^{n},r_{\delta^{2},i}^{n},r_{a}^{n}$ and $r_{\delta a,i}^{n}$ for latter analysis. 
\begin{align*}
&r_{\delta}^{n}  :=\partial_{t}u^{n}-\cfrac{P_H(u^{n+1}-u^{n})}{\Delta t}, \;\; r_{\delta^{2},i}^{n}:=\Delta tP_{H,i}\Big(\cfrac{(-u^{n+1}+2u^{n}-u^{n-1})}{\Delta t^{2}}\Big),\\
&(r_{a}^{n},v)  :=\tilde{a}(u^{n};u^{n},v)-\tilde{a}(P_H u^{n};P_H u^{n},v),
\;\; (r_{\delta a,i}^{n},v)  :=\Delta ta_{0}(P_{H.i}(\cfrac{u^{n+1}-u^{n}}{\Delta t}),v).
\end{align*}
We also define the error terms $\eta_{i}^{n}=P_{H,i}u^{n}-u_{H,i}^{n}$ and $\eta^{n}=\sum_{i=1,2}\eta_{i}^{n}$. 
We will present the stability of the discrete error $\eta_{i}^{n}$ in energy norm and $L_2$ norm in the following two lemmas.
\begin{lemma} \label{lem:error_a}
	Suppose $\|\kappa_{x}^{\frac{1}{2}}\nabla u_{H}^{n+1}\|_{L^{\infty}}<C_{0}$ for all $n$, $\|P_H(\partial_t u)\|_{L^{\infty}([0,T],L^{\infty})}<C_{0}$
	and $\sup_{v_{2}\in V_{H,2}\backslash\{0\}}\cfrac{\|v_{2}\|_{\kappa}^{2}}{\|v_{2}\|^{2}}\leq\cfrac{(1-\gamma)(1-C_{1})}{2\Delta t}$.
	There exist constant $F_{1}>0$ such that 
	\begin{align*}
		& \sum_{i=1,2}\cfrac{1+\gamma}{4\Delta t}\|\delta\eta_{i}^{n+1}\|^{2}+(\cfrac{1-C_{1}}{8})\|\delta\eta^{n+1}\|_{\kappa}^{2}+\Big(1-\cfrac{D^{2}\Delta t}{(1-\gamma)}\Big)\|\eta^{n+1}\|^2_{\kappa(u^{n+1})}\\
		\leq & \sum_{i=1,2}\cfrac{\gamma}{2\Delta t}\|\delta\eta_{i}^{n}\|^{2}+\|\eta^n\|^2_{\kappa(u^n)}+F_{1}\|\eta^{n+1}\|^{2}+(r_{1}^{n},\delta\eta_{1}^{n+1})+(r_{2}^{n},\delta\eta_{2}^{n+1})
	\end{align*}
	where $r_{1}^{n}:=-r_{\delta}^{n}+r_{\delta^{2},2}^{n}-r_{\delta a,2}^{n}-r_{a}^{n}$,
	$r_{2}^{n}:=-r_{\delta}^{n}+r_{\delta^{2},1}^{n}-r_{\delta a,2}^{n}-r_{a}^{n}$ and 
	\[
	\|\eta^n\|^2_{\kappa(u^n)}:=\int_{\Omega}(\cfrac{\kappa_{x}\kappa_{u}(P_{H}u^{n})}{2})|\nabla\eta^{n}|^{2}.
	\]
\end{lemma}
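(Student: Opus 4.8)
The plan is to mirror the energy argument of Lemma \ref{lemma1}, replacing the discrete solution $u_H^n$ by the error $\eta^n = P_H u^n - u_H^n$ and carrying the consistency errors as forcing. First I would derive the error equations. Writing the weak form \eqref{eq:weak_eq} at $t^n$, applying $P_H$, and comparing term by term with the scheme \eqref{eq:ein_scheme}, the difference quotient of $\partial_t u^n$, the projected second difference, the splitting of $a_0$ across time levels, and the projection mismatch in the nonlinear form $\tilde a(u^n;u^n,\cdot)-\tilde a(P_H u^n;P_H u^n,\cdot)$ are collected precisely into $r_\delta^n$, $r_{\delta^2,i}^n$, $r_{\delta a,i}^n$ and $r_a^n$. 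This yields an error system of the same shape as \eqref{eq:ein_scheme} but with $\eta_i$ in place of $u_{H,i}$ and right-hand sides $(r_1^n,\cdot)$, $(r_2^n,\cdot)$. The leftover nonlinear mismatch $\tilde a(P_H u^n;P_H u^n,\cdot)-\tilde a(u_H^n;u_H^n,\cdot)$ I would split algebraically into a gradient-difference part $\int(\kappa_x\kappa_u(P_H u^n)-\kappa_0)\nabla\eta^n\cdot\nabla(\cdot)$, which merges with $a_0(\eta_1^{n+1}+\eta_2^n,\cdot)$ into exactly the energy structure of Lemma \ref{lemma1} with weight $\kappa_x\kappa_u(P_H u^n)$, and a coefficient-difference part $\int\kappa_x(\kappa_u(P_H u^n)-\kappa_u(u_H^n))\nabla u_H^n\cdot\nabla(\cdot)$ to be handled by Lipschitz continuity.

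Next I test the two error equations with $(\delta\eta_1)^{n+1}$ and $(\delta\eta_2)^{n+1}$. The cross terms $((\delta\eta_2)^n,(\delta\eta_1)^{n+1})$ and $((\delta\eta_1)^n,(\delta\eta_2)^{n+1})$ are controlled by the weak-coupling bound $|(v_1,v_2)|\le\gamma\|v_1\|\|v_2\|$ exactly as in \eqref{eq:est1}, producing the $\frac{\gamma}{2}$ coefficients together with a $(1-\gamma)$ slack that I spend later. For the bilinear contributions I reuse verbatim the polarization identities of Lemma \ref{lemma1}: the $a_0$-term gives $\frac{\kappa_0}{2}(|\nabla\eta^{n+1}|^2-|\nabla\eta^n|^2+|\nabla\delta\eta^{n+1}|^2)$, the telescoped weight yields the energy difference $\|\eta^{n+1}\|^2_{\kappa(u^{n+1})}-\|\eta^n\|^2_{\kappa(u^n)}$ plus a remainder $\int\frac{\kappa^{n+1}-\kappa^n}{2}|\nabla\eta^{n+1}|^2$, and this remainder is bounded through the Lipschitz hypothesis on $\kappa_u$ and the $L^\infty$ gradient bound $C_0$ by $D\|(\delta\eta)^{n+1}\|\,\|\eta^{n+1}\|_{\kappa(u^{n+1})}$, which Young's inequality absorbs into the left-hand energy, yielding the factor $1-\frac{D^2\Delta t}{(1-\gamma)}$.

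The remaining structural term $\int\frac{2\kappa_0-\kappa^n}{2}|\nabla\delta\eta^{n+1}|^2-\int\kappa_0\nabla(\delta\eta_2)^{n+1}\cdot\nabla\delta\eta^{n+1}$ I bound below as in Lemma \ref{lemma1}, using $\frac{2\kappa_0-\kappa^n}{2}\ge\frac{1-C_1}{2}\kappa_0$ and completing the square; the new point is to retain a fixed fraction of the coercive part, so that the sharpened spectral condition $\|v_2\|^2_\kappa\le\frac{(1-\gamma)(1-C_1)}{2\Delta t}\|v_2\|^2$ (half of the one in Lemma \ref{lemma1}) absorbs only part of the negative $-\frac{1}{2(1-C_1)}\|(\delta\eta_2)^{n+1}\|^2_\kappa$. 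This leaves the positive term $\frac{1-C_1}{8}\|\delta\eta^{n+1}\|^2_\kappa$ and upgrades the retained $L^2$ coefficient from $\frac{\gamma}{2}$ to $\frac{1+\gamma}{4}$ (since $\frac{\gamma}{2}+\frac{1-\gamma}{4}=\frac{1+\gamma}{4}$). Finally the coefficient-difference part of the nonlinear mismatch, estimated by $|\kappa_u(P_H u^n)-\kappa_u(u_H^n)|\le C_{u'}|\eta^n|$ together with the $L^\infty$ control of $\nabla u_H^n$, contributes the term $F_1\|\eta^{n+1}\|^2$ after Young's inequality, and collecting all contributions gives the claimed inequality.

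I expect the main obstacle to be the bookkeeping of the first step, namely assembling the four residuals so that the error system is exactly of the form \eqref{eq:ein_scheme}, together with isolating the $F_1\|\eta^{n+1}\|^2$ contribution from the nonlinear difference: this coefficient-perturbation term has no analogue in Lemma \ref{lemma1} and genuinely relies on converting a $\kappa_u$-difference into a controllable multiple of $\|\eta\|^2$. Once these are in hand, the remainder is a close, if delicate, replay of the stability computation, and the passage to the maximum-in-$n$ estimate is again a Gronwall argument.
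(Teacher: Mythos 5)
Your overall strategy is the paper's own: derive the error system for $\eta_i^n=P_{H,i}u^n-u_{H,i}^n$ with the four residuals as forcing, test with $\delta\eta_1^{n+1}$ and $\delta\eta_2^{n+1}$, replay the polarization identities and the $\gamma$-cross-term estimate of Lemma \ref{lemma1}, split the nonlinear mismatch $\tilde a(P_Hu^n;P_Hu^n,\cdot)-\tilde a(u_H^n;u_H^n,\cdot)$ into a gradient part that merges with $a_0$ under the weight $\kappa_x\kappa_u(P_Hu^n)$ and a coefficient part controlled by Lipschitz continuity (giving $F_1\|\eta\|^2$), and spend only half of the coercive $\frac{1-C_1}{2}\kappa_0$ so that the halved spectral condition leaves behind $\frac{1-C_1}{8}\|\delta\eta^{n+1}\|_\kappa^2$ and upgrades the $L^2$ coefficient to $\frac{1+\gamma}{4}$. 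All of that matches the appendix argument.

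There is, however, one step that would fail as you describe it: the weight-telescoping remainder. After rewriting $\int\frac{\kappa_x\kappa_u(P_Hu^n)}{2}\bigl(|\nabla\eta^{n+1}|^2-|\nabla\eta^n|^2\bigr)$ as the energy difference $\|\eta^{n+1}\|^2_{\kappa(u^{n+1})}-\|\eta^n\|^2_{\kappa(u^n)}$ plus $\int\frac{\kappa_x(\kappa_u(P_Hu^{n+1})-\kappa_u(P_Hu^n))}{2}|\nabla\eta^{n+1}|^2$, the increment inside $\kappa_u$ is $P_H(u^{n+1}-u^n)=P_H(\delta u^{n+1})$, the projected increment of the \emph{exact} solution, not $\delta\eta^{n+1}$. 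Your proposed bound $D\|(\delta\eta)^{n+1}\|\,\|\eta^{n+1}\|_{\kappa(u^{n+1})}$ therefore has no basis: the integrand contains no factor of $\delta\eta$, and even the Lemma \ref{lemma1} mechanism you are imitating required an $L^\infty$ bound on $\kappa_x^{1/2}\nabla$ of the squared quantity (there $u_H^{n+1}$, here $\eta^{n+1}$, which is not assumed). The correct treatment, and the reason the extra hypothesis $\|P_H(\partial_t u)\|_{L^\infty([0,T],L^\infty)}<C_0$ appears in this lemma but not in Lemma \ref{lemma1}, is to use $|P_H(\delta u^{n+1})|\leq C_0\Delta t$ pointwise together with the Lipschitz constant $C_{u'}$ to bound the remainder directly by $\Delta t\,D_1\|\eta^{n+1}\|^2_{\kappa(u^{n+1})}$, which is then absorbed into the left-hand side as the factor $1-O(\Delta t)$; no Young's inequality against $\|\delta\eta^{n+1}\|$ is involved. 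Since your proposal never invokes that hypothesis, this is a genuine (though locally repairable) gap rather than a cosmetic one. Also note that the maximum-in-$n$ Gronwall passage you mention at the end belongs to Theorem \ref{lemma_convegence}, not to this lemma, which is a single-step estimate.
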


	
\begin{lemma} \label{lem:error_L2}
	Suppose $\|\kappa_{x}^{\frac{1}{2}}\nabla u_{H}^{n+1}\|_{L^{\infty}}<C_{0}$
	for all $n$ and $\sup_{v_{2}\in V_{H,2}\backslash\{0\}}\cfrac{\|v_{2}\|_{\kappa}^{2}}{\|v_{2}\|^{2}}\leq\cfrac{(1-\gamma)(1-C_{1})}{2\Delta t}$.
	There exist constants $C_{2}>0,F_{2}>0$ such that 
	\begin{align*}
		& \tilde{E}_{n+1}(\eta)+\cfrac{1}{2\Delta t}\|\delta\eta^{n}\|^{2}+\cfrac{1}{2}\int_{\Omega}\kappa_{x}\kappa_{u}(P_{H}u^{n})|\nabla\eta^{n}|^{2}\\
		\leq & \tilde{E}_{n}(\eta)+F_{2}\|\eta^{n+1}\|^{2}+\cfrac{1}{2}\Big(\|\delta\eta^{n+1}\|_{\kappa}^{2}+C_{2}\|\delta\eta_{2}^{n+1}\|_{\kappa}^{2}\Big)\\
		&+\cfrac{1}{2\Delta t}\sum_{i=1,2}\Big(\|\delta\eta_{i}^{n+1}\|^{2}+\|\delta\eta_{i}^{n}\|^{2}\Big)+(r_{1}^{n},\eta_{1}^{n})+(r_{2}^{n},\eta_{2}^{n})
	\end{align*}
	where $\tilde{E}_{n+1}(\eta):={E}_{n+1}(\eta)+\cfrac{1}{2}\|\eta^{n+1}\|_{\kappa}^{2}$ and $ {E}_{n+1}(\eta) = \cfrac{1}{2\Delta t}\Big(\sum_{i=1,2}(\|\eta_{i}^{n+1}\|^{2}-\|\eta_{i}^{n}\|^{2})+\|\eta^{n}\|^{2}\Big).$
\end{lemma}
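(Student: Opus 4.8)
The plan is to run the same energy argument as in Lemma~\ref{lemma1} and its companion Lemma~\ref{lem:error_a}, but to test the error equations with $\eta_i^n$ instead of the time increments $\delta\eta_i^{n+1}$, so that the resulting estimate controls $\eta$ itself in $L^2$. First I would derive the equations satisfied by $\eta_i^n=P_{H,i}u^n-u_{H,i}^n$ by subtracting the scheme~\eqref{eq:ein_scheme} from the weak form~\eqref{eq:weak_eq} at $t^n$, projected onto $V_H$ by $P_H$ and discretized consistently in time; by construction the consistency errors collapse into the residuals $r_1^n$ and $r_2^n$ already introduced before the statement. This step is pure bookkeeping with the definitions of $r_\delta$, $r_{\delta^2,i}$, $r_{\delta a,i}$ and $r_a$.

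Next I would set $v_1=\eta_1^n$, $v_2=\eta_2^n$ and add the two equations. For the mass part the diagonal terms give $\frac{1}{2\Delta t}\sum_{i}(\|\eta_i^{n+1}\|^2-\|\eta_i^n\|^2-\|\delta\eta_i^{n+1}\|^2)$ through the identity $(a-b,b)=\frac12(\|a\|^2-\|b\|^2-\|a-b\|^2)$, and the staggered cross terms rearrange via $(\delta\eta_2^n,\eta_1^n)+(\delta\eta_1^n,\eta_2^n)=\big[(\eta_1^n,\eta_2^n)-(\eta_1^{n-1},\eta_2^{n-1})\big]+(\delta\eta_1^n,\delta\eta_2^n)$. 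Combining the two telescoping contributions reproduces exactly $E_{n+1}(\eta)-E_n(\eta)$; adding $\frac{1}{2\Delta t}\sum_i\|\delta\eta_i^n\|^2$ to both sides turns the leftover cross increment $\frac{1}{\Delta t}(\delta\eta_1^n,\delta\eta_2^n)$ into $\frac{1}{2\Delta t}\|\delta\eta^n\|^2$ on the left and leaves $\frac{1}{2\Delta t}\sum_i\|\delta\eta_i^n\|^2$ on the right, while $-\frac{1}{2\Delta t}\sum_i\|\delta\eta_i^{n+1}\|^2$ moves to the right. The forcing yields $(r_1^n,\eta_1^n)+(r_2^n,\eta_2^n)$ directly.

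Then I would treat the diffusion. Summing the null terms produces $a_0(\eta_1^{n+1}+\eta_2^n,\eta^n)$, which I would reorganize as in Lemma~\ref{lemma1}: the implicit/explicit time-level mismatch in the first slot is isolated as $\kappa_0$-increments of the form $\frac12\|\delta\eta^{n+1}\|_\kappa^2$ and $\frac{C_2}{2}\|\delta\eta_2^{n+1}\|_\kappa^2$, while the explicit nonlinear difference $\tilde a(u_H^n;u_H^n,\cdot)-\tilde a(P_Hu^n;P_Hu^n,\cdot)$ recombines with the constant-coefficient $a_0$ part to recover the genuine $\kappa(u^n)$-weighted energy $\frac12\int_\Omega\kappa_x\kappa_u(P_Hu^n)|\nabla\eta^n|^2$ kept on the left. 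The coefficient mismatch created in this recombination is bounded, exactly as in Lemma~\ref{lemma1}, by $|\kappa_u(w)-\kappa_u(v)|\le C_{u'}|w-v|$ together with $\|\kappa_x^{1/2}\nabla u_H^{n+1}\|_{L^\infty}<C_0$, giving the quadratic remainder $F_2\|\eta^{n+1}\|^2$. The definitional term $\frac12\|\eta^{n+1}\|_\kappa^2$ inside $\tilde E_{n+1}$ sits on both sides, and the inverse inequality $\sup_{v_2}\|v_2\|_\kappa^2/\|v_2\|^2\le (1-\gamma)(1-C_1)/(2\Delta t)$ on $V_{H,2}$ is what keeps the explicit-subspace contributions controlled, as in Lemma~\ref{lemma1}.

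I expect the diffusion reorganization to be the main obstacle. Unlike the mass part, the EIN null form $a_0(\eta_1^{n+1}+\eta_2^n,\eta^n)$ is neither symmetric nor evaluated at a single time level, so its splitting must be coordinated precisely with the explicit nonlinear difference and with the residual $r_{\delta a,i}$ so that the surviving energy carries the correct coefficient $\kappa_x\kappa_u(P_Hu^n)$ and every uncontrolled term is either of telescoping type, of the benign increment form $\|\delta\eta^{n+1}\|_\kappa^2$ or $\|\delta\eta_2^{n+1}\|_\kappa^2$, or absorbable into $F_2\|\eta^{n+1}\|^2$. Once this diffusion identity is in hand, assembling the mass and diffusion contributions gives the stated inequality; I would regard it as the $L^2$ companion to Lemma~\ref{lem:error_a}, to be combined with that estimate and a discrete Gronwall argument in the final convergence theorem.
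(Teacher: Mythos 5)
Your plan follows essentially the same route as the paper's proof in the appendix: test the error equations (with the consistency residuals already folded into $r_1^n,r_2^n$) against $\eta_1^n,\eta_2^n$, obtain the mass part as $E_{n+1}(\eta)-E_n(\eta)+\frac{1}{2\Delta t}\big(\|\delta\eta^n\|^2-\sum_i(\|\delta\eta_i^{n+1}\|^2+\|\delta\eta_i^n\|^2)\big)$ by exactly the identities you describe, split the null form via $\eta_1^{n+1}+\eta_2^n=\eta^{n+1}-\delta\eta_2^{n+1}$ into telescoping $\kappa_0$-energies plus the increments $\frac12\|\delta\eta^{n+1}\|_\kappa^2$ and $\frac{C_2}{2}\|\delta\eta_2^{n+1}\|_\kappa^2$, and bound the nonlinear coefficient mismatch $I_2$ using the Lipschitz bound on $\kappa_u$ and $\|\kappa_x^{1/2}\nabla u_H^n\|_{L^\infty}<C_0$ to get $F_2\|\eta^{n+1}\|^2$. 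The one small point of emphasis to correct is that within this lemma the inverse inequality on $V_{H,2}$ is not actually used to absorb anything — the $\kappa$-increments are simply left on the right-hand side and only absorbed later when this estimate is combined with Lemma \ref{lem:error_a} in the convergence theorem.
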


Combining Lemma \ref{lem:error_a} and Lemma \ref{lem:error_L2}, we now show the convergence of the discrete error in the following lemma.
\begin{theorem}\label{lemma_convegence}
	Suppose $\|\kappa_{x}^{\frac{1}{2}}\nabla u_{H}^{n+1}\|_{L^{\infty}}<C_{0}$ for all $n$, $\|P_H(\partial_t u)\|_{L^{\infty}([0,T],L^{\infty})}<C_{0}$ and $\sup_{v_{2}\in V_{H,2}\backslash\{0\}}\cfrac{\|v_{2}\|_{\kappa}^{2}}{\|v_{2}\|^{2}}\leq\cfrac{(1-\gamma)}{2\Delta t}\min\{1-C_{1},\cfrac{C_{2}}{2}\}$
	and $\alpha=\cfrac{1}{4}\min\{1-\gamma,1-C_{1}\}$, we have 
	\begin{align*}
		& \max_{n}\Big(\cfrac{1}{2\Delta t}\sum_{i=1,2}G_{\alpha}\|\eta_{i}^{n+1}\|^{2}+\int_{\Omega}(\cfrac{\kappa_{x}\kappa_{u}(P_{H}u^{n+1})}{2})|\nabla\eta^{n+1}|^{2}\Big)\leq\max_{n}\Big(\tilde{E}_{n+1,\alpha}(\eta)\Big)\\
		\leq & e^{C_{3}T}\Big(C_{T}\max_{n}(\|r^n_{\delta }\|^{2}+\|r_{a}^{n}\|^2_{\kappa^{*}}+\|\cfrac{\delta r_{a}^{n}}{\Delta t}\|^2_{\kappa^{*}}+\sum_{i=1,2}(\|r_{\delta^{2},i}^{n}\|^2+\|r_{\delta a,i}^{n}\|^2_{\kappa^{*}}+\|\cfrac{\delta r_{\delta a,i}^{n}}{\Delta t}\|^2_{\kappa^{*}}))\\
		&+\sum_{i=1,2}\Big(\cfrac{\gamma}{2\Delta t}+\cfrac{1-\gamma}{8\Delta t}\Big)\|\delta\eta_{i}^{1}\|^{2}+\int_{\Omega}(\cfrac{\kappa_{x}\kappa_{u}(P_{H}u^{1})}{2})|\nabla\eta^{1}|^{2}+\alpha\Big(E_{1}(\eta)+\cfrac{1}{2}\|\eta^{1}\|_{\kappa}^{2}\Big)\Big).
	\end{align*}
	where $\tilde{E}_{n+1,\alpha}(\eta):=\sum_{i=1,2}\Big(\cfrac{\gamma}{2\Delta t}+\cfrac{1-\gamma}{8\Delta t}\Big)\|\delta\eta_{i}^{n+1}\|^{2}+\int_{\Omega}(\cfrac{\kappa_{x}\kappa_{u}(P_{H}u^{n+1})}{2})|\nabla\eta^{n+1}|^{2}+\alpha\Big(E_{n+1}(\eta)+\cfrac{1}{2}\|\eta^{n+1}\|_{\kappa}^{2}\Big)$ and 
 $\|r\|_{\kappa^*} := \sup_{v\in V}\cfrac{(r,v)}{\|v\|_{\kappa}}$.
\end{theorem}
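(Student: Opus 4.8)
The plan is to obtain the stated estimate as a single weighted combination of the two preceding lemmas followed by a discrete Gronwall argument. Concretely, I would add the inequality of Lemma~\ref{lem:error_a} to $\alpha$ times the inequality of Lemma~\ref{lem:error_L2}, with $\alpha=\tfrac14\min\{1-\gamma,1-C_1\}$ as prescribed. The combined left-hand side reproduces $\tilde E_{n+1,\alpha}(\eta)$ by construction: the terms $\tfrac{1+\gamma}{4\Delta t}\|\delta\eta_i^{n+1}\|^2$ and $\|\eta^{n+1}\|^2_{\kappa(u^{n+1})}$ from Lemma~\ref{lem:error_a}, together with $\alpha\tilde E_{n+1}(\eta)$ from Lemma~\ref{lem:error_L2}, match the definition of $\tilde E_{n+1,\alpha}$ once I set aside a reserve of $\tfrac{1-\gamma}{8\Delta t}\|\delta\eta_i^{n+1}\|^2$ and $\tfrac{1-C_1}{8}\|\delta\eta^{n+1}\|_\kappa^2$ on the left.

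The first substantive step is to absorb the interface terms that $\alpha\times$Lemma~\ref{lem:error_L2} contributes on the right, namely $\tfrac{\alpha}{2}\|\delta\eta^{n+1}\|_\kappa^2$, $\tfrac{\alpha C_2}{2}\|\delta\eta_2^{n+1}\|_\kappa^2$, and $\tfrac{\alpha}{2\Delta t}\sum_i\|\delta\eta_i^{n+1}\|^2$. The term $\tfrac{\alpha}{2}\|\delta\eta^{n+1}\|_\kappa^2$ is swallowed by the reserve $\tfrac{1-C_1}{8}\|\delta\eta^{n+1}\|_\kappa^2$ precisely because $\alpha\le\tfrac{1-C_1}{4}$, and the $L_2$ reserve $\tfrac{1-\gamma}{8\Delta t}$ handles $\tfrac{\alpha}{2\Delta t}$ since $\alpha\le\tfrac{1-\gamma}{4}$. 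The genuinely dangerous term $\tfrac{\alpha C_2}{2}\|\delta\eta_2^{n+1}\|_\kappa^2$ is converted to an $L_2$ term through the sharpened spectral bound $\|\delta\eta_2^{n+1}\|_\kappa^2\le\tfrac{1-\gamma}{2\Delta t}\min\{1-C_1,\tfrac{C_2}{2}\}\|\delta\eta_2^{n+1}\|^2$ (valid since $\delta\eta_2^{n+1}\in V_{H,2}$) and then absorbed as well. This is exactly where contrast-independence enters, and why the hypothesis on $V_{H,2}$ is tightened relative to Lemma~\ref{lemma1}.

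After these absorptions I expect a one-step recursion of the form $\tilde E_{n+1,\alpha}(\eta)\le\tilde E_{n,\alpha}(\eta)+C_3\Delta t\,\tilde E_{n+1,\alpha}(\eta)+(r_1^n,\delta\eta_1^{n+1})+(r_2^n,\delta\eta_2^{n+1})+\alpha(r_1^n,\eta_1^n)+\alpha(r_2^n,\eta_2^n)$, where $C_3$ collects $F_1+\alpha F_2$, the defect $\tfrac{D^2\Delta t}{1-\gamma}$ in front of $\|\eta^{n+1}\|^2_{\kappa(u^{n+1})}$, and the time variation of the coefficient $\kappa_x\kappa_u(P_Hu^{n+1})$, which I would control via the Lipschitz constant $C_{u'}$ and the hypothesis $\|P_H(\partial_t u)\|_{L^\infty}<C_0$. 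The residual pairings must then be split according to the norm in which each piece is measured: the $L_2$-type residuals $r_\delta^n$ and $r_{\delta^2,i}^n$ are bounded directly by Young's inequality against the available $\tfrac1{\Delta t}\|\delta\eta_i^{n+1}\|^2$ (for the Lemma~\ref{lem:error_a} pairing) or against $\|\eta^n\|^2$ fed into Gronwall (for the Lemma~\ref{lem:error_L2} pairing), which produces only the $\|r_\delta^n\|^2$ and $\|r_{\delta^2,i}^n\|^2$ contributions without any $\delta/\Delta t$ companion.

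The \emph{main obstacle}, and the step I would treat most carefully, is the $\kappa^*$-type residuals $r_a^n$ and $r_{\delta a,i}^n$, which I cannot afford to pair against $\|\delta\eta\|_\kappa^2$ because that budget has already been spent on the interface absorption. For these I would apply summation by parts in the time index to $\sum_n(r_i^n,\delta\eta_i^{n+1})$, transferring the difference onto the residual: this replaces $\|\delta\eta\|_\kappa$ pairings by terms $\Delta t(\tfrac{\delta r^n}{\Delta t},\eta^n)$ bounded through $\|\tfrac{\delta r_a^n}{\Delta t}\|_{\kappa^*}\|\eta^n\|_\kappa$ (with the $\|\eta^n\|_\kappa^2$ part routed into Gronwall) plus end-point terms controlled by $\|r_a^n\|_{\kappa^*}\|\eta^N\|_\kappa$, the latter absorbed into the maximal energy. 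This is precisely what produces the paired appearance of $\|r_a^n\|_{\kappa^*}^2$ together with $\|\tfrac{\delta r_a^n}{\Delta t}\|_{\kappa^*}^2$ (and likewise for $r_{\delta a,i}^n$) in the final bound. Once every residual contribution has been reduced either to a $C_T\max_n\|\cdot\|^2$ term or to a $\|\eta\|^2$ or $\|\eta\|_\kappa^2$ term, I would apply discrete Gronwall to $(1-C_3\Delta t)\tilde E_{n+1,\alpha}(\eta)\le\tilde E_{n,\alpha}(\eta)+\Delta t\,\mathcal R^n$, summing from $n=1$, to obtain the factor $e^{C_3T}$, the residual block, and the initial-data terms carried by $\tilde E_{1,\alpha}(\eta)$. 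The first inequality $\max_n(\tfrac{1}{2\Delta t}\sum_i G_\alpha\|\eta_i^{n+1}\|^2+\cdots)\le\max_n\tilde E_{n+1,\alpha}(\eta)$ is then immediate from the definition of $\tilde E_{n+1,\alpha}$.
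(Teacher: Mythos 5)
Your proposal follows essentially the same route as the paper's proof: it adds Lemma~\ref{lem:error_a} to $\alpha$ times Lemma~\ref{lem:error_L2}, absorbs the $\frac{\alpha}{2}\|\delta\eta^{n+1}\|_{\kappa}^{2}$ and $\frac{\alpha C_{2}}{2}\|\delta\eta_{2}^{n+1}\|_{\kappa}^{2}$ terms using the reserves and the strengthened spectral condition on $V_{H,2}$, arrives at the one-step recursion $(1-C_{3}\Delta t)\tilde{E}_{n+1,\alpha}\leq\tilde{E}_{n,\alpha}+\text{residuals}$, treats the $\kappa^{*}$-type residuals by summation by parts in the time index (producing the paired $\|r_{a}^{n}\|_{\kappa^{*}}$ and $\|\frac{\delta r_{a}^{n}}{\Delta t}\|_{\kappa^{*}}$ contributions), and concludes with discrete Gronwall, exactly as the paper does. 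The one point you understate is that the first inequality, and equally the absorption of $(F_{1}+\alpha F_{2})\|\eta^{n+1}\|^{2}$ into $C_{3}\Delta t\,\tilde{E}_{n+1,\alpha}$, is not immediate from the definition but rests on the preliminary bound $\alpha E_{n+1}(\eta)+\frac{\gamma}{2\Delta t}\sum_{i}\|\delta\eta_{i}^{n+1}\|^{2}\geq\frac{1}{2\Delta t}\sum_{i}G_{\alpha}\|\eta_{i}^{n+1}\|^{2}$ with $G_{\alpha}=\alpha\frac{1-\alpha-\gamma}{1-\alpha}>0$ for $\alpha<1-\gamma$, which the paper derives first from $\|\eta^{n}\|^{2}\geq(1-\gamma)\sum_{i}\|\eta_{i}^{n}\|^{2}$ and Young's inequality.
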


\begin{proof}
	We will start the proof from estimating $\|\eta^{n+1}\|^{2}$. Since 
 \[\|\eta^{n}\|^{2}=\sum_{i=1,2}\|\eta_{i}^{n}\|^{2}-2(\eta_{1}^{n},\eta_{2}^{n})\geq(1-\gamma)\sum_{i=1,2}\|\eta_{i}^{n}\|^{2},
 \]
	we have
	$
	E_{n+1}(\eta)\geq\cfrac{1}{2\Delta t}\Big(\sum_{i=1,2}(\|\eta_{i}^{n+1}\|^{2}-\gamma\|\eta_{i}^{n}\|^{2})\Big).$
	
 Thus, we have 
	\begin{align*}
		& \alpha E_{n+1}(\eta)+\cfrac{\gamma}{2\Delta t}\sum_{i=1,2}\|\delta\eta_{i}^{n+1}\|^{2}\\
		\geq& \cfrac{1}{2\Delta t}\Big(\sum_{i=1,2}(\alpha+\gamma)\|\eta_{i}^{n+1}\|^{2}+\gamma(1-\alpha)\|\eta_{i}^{n}\|^{2}-2\gamma(\eta_{i}^{n+1},\eta_{i}^{n})\Big)\\
		\geq & \cfrac{1}{2\Delta t}\Big(\sum_{i=1,2}(\alpha+\gamma-\cfrac{\gamma}{1-\alpha})\|\eta_{i}^{n+1}\|^{2}\Big)
	\end{align*}
	We take $\alpha<1-\gamma$ and obtain
 $\alpha+\gamma-\cfrac{\gamma}{1-\alpha}=\alpha\cfrac{1-\alpha-\gamma}{1-\alpha}=:G_{\alpha}>0$ and 
  \begin{align*}
		& \cfrac{4\Delta t}{G_{\alpha}}\Big(\alpha E_{n+1}(\eta)+\cfrac{\gamma}{2\Delta t}\sum_{i=1,2}\|\delta\eta_{i}^{n+1}\|^{2}\Big)
		\geq  \cfrac{2}{G_{\alpha}}\Big(\sum_{i=1,2}G_{\alpha}\|\eta_{i}^{n+1}\|^{2}\Big)\geq\|\eta^{n+1}\|^{2}.
	\end{align*}
 
	Furthermore, combining the results from Lemma \ref{lem:error_a} and Lemma \ref{lem:error_L2}, we get
	\begin{align*}
		& \sum_{i=1,2}\Big(\cfrac{\gamma}{2\Delta t}+\cfrac{1-\gamma}{4\Delta t}\Big)\|\delta\eta_{i}^{n+1}\|^{2}+\cfrac{1-C_{1}}{8}\|\delta\eta^{n+1}\|_{\kappa}^{2}+\alpha E_{n+1}(\eta)+\cfrac{\alpha}{2}\|\eta^{n+1}\|_{\kappa}^{2}\\
		&+\cfrac{\alpha}{2\Delta t}\|\delta\eta^{n}\|^{2} +\Big(1-D_1\Delta t\Big)\int_{\Omega}\cfrac{\kappa_{x}\kappa_{u}(P_{H}u^{n+1})}{2}|\nabla\eta^{n+1}|^{2} +\cfrac{\alpha}{4}\int_{\Omega}\kappa_{x}\kappa_{u}(P_{H}u^{n})|\nabla\eta^{n}|^{2}
  	\end{align*}
   	\begin{align*}
		\leq & \sum_{i=1,2}\cfrac{\gamma}{2\Delta t}\|\delta\eta_{i}^{n}\|^{2}+\int_{\Omega}(\cfrac{\kappa_{x}\kappa_{u}(P_{H}u^{n})}{2})|\nabla\eta^{n}|^{2}+(F_{1}+\alpha F_2)\|\eta^{n+1}\|^{2}\\
		& +\alpha\Big(E_{n}(\eta)+\cfrac{1}{2}\|\eta^{n}\|_{\kappa}^{2}+\cfrac{C_{2}}{2}\|\delta\eta_{2}^{n+1}\|_{\kappa}^{2}+(r_{1}^{n},\eta_{1}^{n})+(r_{2}^{n},\eta_{2}^{n})\Big)\\
  &+\cfrac{\alpha}{2\Delta t}\sum_{i=1,2}\Big(\|\delta\eta_{i}^{n+1}\|^{2}+\|\delta\eta_{i}^{n}\|^{2}\Big)+(r_{1}^{n},\delta\eta_{1}^{n+1})+(r_{2}^{n},\delta\eta_{2}^{n+1}).
	\end{align*}
	For $\alpha=\cfrac{1}{4}\min\{1-\gamma,1-C_{1}\}$, using $\|\delta\eta^{n}\|^{2}>(1-\gamma)\sum_{i=1,2}\|\delta\eta_{i}^{n+1}\|^{2}$,
	we have
	\begin{align*}
		& \sum_{i=1,2}\Big(\cfrac{\gamma}{2\Delta t}+\cfrac{1-\gamma}{8\Delta t}\Big)\|\delta\eta_{i}^{n+1}\|^{2}+\Big(1-D_1\Delta t\Big)\int_{\Omega}(\cfrac{\kappa_{x}\kappa_{u}(P_{H}u^{n+1})}{2})|\nabla\eta^{n+1}|^{2}\\
		& +\alpha\Big(E_{n+1}(\eta)+\cfrac{1}{2}\|\eta^{n+1}\|_{\kappa}^{2}+\cfrac{1-\gamma}{2\Delta t}\sum_{i=1,2}\|\delta\eta_{i}^{n+1}\|^{2}\Big) \\
		\leq & \sum_{i=1,2}\Big(\cfrac{\gamma}{2\Delta t}+\cfrac{1-\gamma}{8\Delta t}\Big)\|\delta\eta_{i}^{n}\|^{2}+(1-\cfrac{\alpha}{2})\int_{\Omega}(\cfrac{\kappa_{x}\kappa_{u}(P_{H}u^{n})}{2})|\nabla\eta^{n}|^{2}\\
		& +\alpha\Big(E_{n}(\eta)+\cfrac{1}{2}\|\eta^{n}\|_{\kappa}^{2}+\cfrac{C_{2}}{2}\|\delta\eta_{2}^{n+1}\|_{\kappa}^{2}+(r_{1}^{n},\eta_{1}^{n})+(r_{2}^{n},\eta_{2}^{n})\Big)\\
		& +(r_{1}^{n},\delta\eta_{1}^{n+1})+(r_{2}^{n},\delta\eta_{2}^{n+1})+(F_{1}+\alpha F_2)\|\eta^{n+1}\|^{2}.
	\end{align*}
	Since $\sup_{v_{2}\in V_{H,2}\backslash\{0\}}\cfrac{\|v_{2}\|_{\kappa}^{2}}{\|v_{2}\|^{2}}\leq\cfrac{(1-\gamma)}{2\Delta t}\min\{1-C_{1},\cfrac{C_{2}}{2}\}$
	, we have 
	\begin{align*}
		& \tilde{E}_{n+1,\alpha}(\eta)-D_1\Delta t\int_{\Omega}(\cfrac{\kappa_{x}\kappa_{u}(P_{H}u^{n+1})}{2})|\nabla\eta^{n+1}|^{2}\\
		\leq & \tilde{E}_{n,\alpha}(\eta)+(F_{1}+\alpha F_2)\|\eta^{n+1}\|^{2}+(r_{1}^{n},\delta\eta_{1}^{n+1})+(r_{2}^{n},\delta\eta_{2}^{n+1})+\alpha\Big((r_{1}^{n},\eta_{1}^{n})+(r_{2}^{n},\eta_{2}^{n})\Big)\\
		\leq & \tilde{E}_{n,\alpha}(\eta)+\cfrac{4\Delta t(F_{1}+\alpha F_2)}{G_{\alpha}}\Big(\alpha E_{n+1}(\eta)+\cfrac{\gamma}{2\Delta t}\sum_{i=1,2}\|\delta\eta_{i}^{n+1}\|^{2}\Big)\\
		& +(r_{1}^{n},\delta\eta_{1}^{n+1})+(r_{2}^{n},\delta\eta_{2}^{n+1})+\alpha\Big((r_{1}^{n},\eta_{1}^{n})+(r_{2}^{n},\eta_{2}^{n})\Big).
	\end{align*}
	where $
	\tilde{E}_{n+1,\alpha}(\eta):=\sum_{i=1,2}\Big(\cfrac{\gamma}{2\Delta t}+\cfrac{1-\gamma}{8\Delta t}\Big)\|\delta\eta_{i}^{n+1}\|^{2}+\int_{\Omega}(\cfrac{\kappa_{x}\kappa_{u}(P_{H}u^{n+1})}{2})|\nabla\eta^{n+1}|^{2}+\alpha\Big(E_{n+1}(\eta)+\cfrac{1}{2}\|\eta^{n+1}\|_{\kappa}^{2}\Big)
	$.
 
	Since 
 \begin{align*}
     \Big(\alpha E_{n+1}(\eta)+\cfrac{\gamma}{2\Delta t}\sum_{i=1,2}\|\delta\eta_{i}^{n+1}\|^{2}\Big)&\leq\tilde{E}_{n+1,\alpha}(\eta),\\
     \int_{\Omega}(\cfrac{\kappa_{x}\kappa_{u}(P_{H}u^{n+1})}{2})|\nabla\eta^{n+1}|^{2}&\leq\tilde{E}_{n+1,\alpha}(\eta).
 \end{align*}
	we obtain 
	\begin{align*}
		& (1-C_{3}\Delta t)\tilde{E}_{n+1,\alpha}(\eta)
		\leq \tilde{E}_{n,\alpha}(\eta)+(r_{1}^{n},\delta\eta_{1}^{n+1})+(r_{2}^{n},\delta\eta_{2}^{n+1})+\alpha((r_{1}^{n},\eta_{1}^{n})+(r_{2}^{n},\eta_{2}^{n})).
	\end{align*}
 By the definition of $r_{1}^{n}$ and discrete integration by parts, we have
\begin{align*}
&\Big|\sum^{N}_{n=1} (r_{1}^{n},\delta\eta^{n+1}_1)\Big| \\
=& \Big|\sum^{N}_{n=1}\Big(-(r_{\delta}^{n},\delta\eta^{n+1}_1)+(r_{\delta^{2},2}^{n},\delta\eta^{n+1}_1)-(r_{\delta a,2}^{n},\delta\eta^{n+1}_1)-(r_{a}^{n},\delta\eta^{n+1}_1)\Big)\Big|\\
\leq& \Big|\sum^{N}_{n=1}\Big(\Big(\|r_{\delta}^{n}\| + \|r_{\delta^{2},2}^{n}\|\Big) \|\delta \eta^{n+1}_1\| + \Delta t\Big(\|\cfrac{\delta r_{\delta a,2}^{n}}{\Delta t}\|_{\kappa^*} + \|\cfrac{\delta r_{a}^{n}}{\Delta t}\|_{\kappa^*}\Big)\|\eta^{n+1}_1\|_{\kappa}\Big|\\
&+\Big(\|r_{\delta a,2}^{N+1}\|_{\kappa^*}+\|r_{a}^{N+1}\|_{\kappa^*}\Big)\|\eta_{1}^{N+1}\|_{\kappa} + \Big(\|r_{\delta a,2}^{1}\|_{\kappa^*}+\|r_{a}^{1}\|_{\kappa^*}\Big)\|\eta_{1}^{1}\|_{\kappa}.
\end{align*}
	We can obtain a similar inequality for $(r_{2}^{n},\delta\eta^{n+1}_2)$. By Gronwall's inequality, we obtain
	\begin{align*}
		& \max_{n}\Big(\cfrac{1}{2\Delta t}\sum_{i=1,2}G_{\alpha}\|\eta_{i}^{n+1}\|^{2}+\int_{\Omega}(\cfrac{\kappa_{x}\kappa_{u}(P_{H}u^{n+1})}{2})|\nabla\eta^{n+1}|^{2}\Big)\leq\max_{n}\Big(\tilde{E}_{n+1,\alpha}(\eta)\Big)\\
		\leq & e^{C_{3}T}\Big(C_{T}\max_{n}(\|r^n_{\delta }\|^{2}+\|r_{a}^{n}\|^2_{\kappa^{*}}+\|\cfrac{\delta r_{a}^{n}}{\Delta t}\|^2_{\kappa^{*}}+\sum_{i=1,2}(\|r_{\delta^{2},i}^{n}\|^2+\|r_{\delta a,i}^{n}\|^2_{\kappa^{*}}+\|\cfrac{\delta r_{\delta a,i}^{n}}{\Delta t}\|^2_{\kappa^{*}}))\\
		&+\sum_{i=1,2}\Big(\cfrac{\gamma}{2\Delta t}+\cfrac{1-\gamma}{8\Delta t}\Big)\|\delta\eta_{i}^{1}\|^{2}+\int_{\Omega}(\cfrac{\kappa_{x}\kappa_{u}(P_{H}u^{1})}{2})|\nabla\eta^{1}|^{2}+\alpha\Big(E_{1}(\eta)+\cfrac{1}{2}\|\eta^{1}\|_{\kappa}^{2}\Big)\Big).
	\end{align*}
 This completes the proof.
\end{proof}

Finally, we present the estimates for the residual terms appeared in Theorem \ref{lemma_convegence}.
\begin{lemma}
	Suppose $u\in C^{2}([0,T],\kappa)\cap C^{1}([0,T],\kappa)\cap L^{\infty}([0,T],W^{1,\infty}(\Omega))$, then
	\begin{align*}
		&\|r_{\delta}^{n}\|  \leq C\Big(H\|u\|_{C^{1}([0,T],\kappa)}+\Delta t\|u\|_{C^{2}([0,T],\kappa)}\Big), \;\;
		\|r_{\delta^{2},i}^{n}\|  \leq C\Delta t\|u\|_{C^{2}([0,T],\kappa)},\\
		&\|r_{a}^{n}\|_{\kappa^{*}}  \leq C(1+H\|\nabla u\|_{L^{\infty}([0,T],L^{\infty})})\|u^{n}-P_{H}u^{n}\|_{\kappa},\;\;
		\|r_{\delta a,i}^{n}\|_{\kappa^{*}}  \leq C\Delta t\|\partial_{t}u(\xi)\|_{\kappa}.
	\end{align*}
 Moreover, if $\|P_{H}\Big(\partial_{t}\kappa(u)\Big)\|_{L^{\infty}([0,T],L^{\infty})}<\infty$
and $\|\kappa_{x}^{\frac{1}{2}}\nabla u_{t}\|_{L^{\infty}([0,T],L^{\infty})}<\infty$,
we have 
\begin{align*}
\|\cfrac{\delta r_{a}^{n}}{\Delta t}\|_{\kappa^{*}}\leq & C\Big(\|\kappa_{x}^{\frac{1}{2}}\nabla u\|_{L^{\infty}([0,T],L^{\infty})}\|(I-P_{H})\partial_{t}\kappa(u)\|_{L^{\infty}([0,T],L^{2})}\\
 &+\|(I-P_{H})u\|_{L^{\infty}([0,T],\kappa)} + \|(I-P_{H})\partial_{t}u\|_{L^{\infty}([0,T],\kappa)}\\
 &  +\|(I-P_{H})\kappa(u)\|_{L^{\infty}([0,T],\kappa)}\Big)\\
\|\cfrac{\delta r_{a,i}^{n}}{\Delta t}\|_{\kappa^{*}}\leq & C\Delta t\|\partial_{tt}u\|_{C^{2}([0,T],\kappa)}
\end{align*}
\end{lemma}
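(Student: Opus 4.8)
The plan is to estimate the five residual families one at a time, in each case separating a \emph{temporal consistency} part, handled by Taylor expansion in $t$, from a \emph{spatial projection} part, handled by the stability and approximation properties of the $a_0$-elliptic projection $P_H$ and its components $P_{H,i}$. Throughout I would use four ingredients: (i) energy stability of the projections, $\|P_{H,i}w\|_{\kappa}\le\|w\|_{\kappa}$; (ii) the CEM-GMsFEM/NLMC approximation estimate $\|(I-P_H)w\|\le CH\|(I-P_H)w\|_{\kappa}$; (iii) Poincar\'e together with the lower bound on $\kappa_0$ to pass from the $L^2$ norm $\|\cdot\|$ to the energy norm $\|\cdot\|_{\kappa}$; and (iv) the identity $\kappa_x\kappa_0^{-1}=\kappa_u(\tilde u)^{-1}\le\underline{\kappa}_u^{-1}$, which trades the high-contrast weight $\kappa_x$ for the weight $\kappa_0$ used in $\|\cdot\|_{\kappa}$. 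To bound a dual norm $\|r\|_{\kappa^*}$ I would bound $(r,v)\le(\text{const})\|v\|_{\kappa}$ uniformly in $v\in V$.

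For $r_{\delta}^n$ I would write $r_{\delta}^n=\big(\partial_t u^n-\tfrac{u^{n+1}-u^n}{\Delta t}\big)+\tfrac{(I-P_H)(u^{n+1}-u^n)}{\Delta t}$. Taylor expansion bounds the first bracket by $C\Delta t\|\partial_{tt}u\|$, which (iii) converts to the $\kappa$-scale $C\Delta t\|u\|_{C^2([0,T],\kappa)}$; and (ii) plus a mean-value step $\tfrac{u^{n+1}-u^n}{\Delta t}=\partial_t u(\xi)$ bounds the second term by $CH\|u\|_{C^1([0,T],\kappa)}$. For $r_{\delta^2,i}^n$ I would use the second-difference Taylor formula $\tfrac{-u^{n+1}+2u^n-u^{n-1}}{\Delta t^2}=-\partial_{tt}u(\xi)$, so that $r_{\delta^2,i}^n=-\Delta t\,P_{H,i}\partial_{tt}u(\xi)$, and (i)+(iii) give $C\Delta t\|u\|_{C^2([0,T],\kappa)}$. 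For $r_{\delta a,i}^n=a_0\big(P_{H,i}(u^{n+1}-u^n),\cdot\big)$ I would apply Cauchy--Schwarz in $a_0$, energy stability (i), and the same mean-value step to reach $C\Delta t\|\partial_t u(\xi)\|_{\kappa}$.

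The estimate for $r_a^n$ is where the two structural constants appear. Recalling $\tilde a(w;u,v)=\int_{\Omega}\kappa_x(\kappa_u(w)-\kappa_u(\tilde u))\nabla u\cdot\nabla v$, I would insert the cross term $\tilde a(P_Hu^n;u^n,v)$ and split $(r_a^n,v)$ into $\int\kappa_x(\kappa_u(u^n)-\kappa_u(P_Hu^n))\nabla u^n\cdot\nabla v$ and $\int\kappa_x(\kappa_u(P_Hu^n)-\kappa_u(\tilde u))\nabla(u^n-P_Hu^n)\cdot\nabla v$. The second integrand uses boundedness $\kappa_u\le C_u$ and (iv) plus Cauchy--Schwarz to give $C\|u^n-P_Hu^n\|_{\kappa}\|v\|_{\kappa}$, the ``$1$'' in the bound. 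For the first I would use Lipschitz continuity $|\kappa_u(u^n)-\kappa_u(P_Hu^n)|\le C_{u'}|u^n-P_Hu^n|$, factor out $\|\kappa_x^{1/2}\nabla u^n\|_{L^{\infty}}\le C_0$, and apply (iv)+Cauchy--Schwarz to get $C\|\nabla u\|_{L^{\infty}}\|u^n-P_Hu^n\|\,\|v\|_{\kappa}$; finally (ii) converts $\|u^n-P_Hu^n\|$ into $H\|u^n-P_Hu^n\|_{\kappa}$, producing the $H\|\nabla u\|_{L^{\infty}}$ factor.

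I expect the ``moreover'' bound on $\tfrac{\delta r_a^n}{\Delta t}=\tfrac{r_a^{n+1}-r_a^n}{\Delta t}$ to be the main obstacle, since it requires differentiating the nonlinear form in time \emph{through} the projection. The plan is to write $\tfrac{r_a^{n+1}-r_a^n}{\Delta t}=\tfrac1{\Delta t}\int_{t^n}^{t^{n+1}}\tfrac{d}{dt}[\,\cdot\,]\,dt$ so the $\Delta t$ cancels, then apply the product and chain rules: $\partial_t\tilde a(u;u,v)$ produces a term $\int\partial_t\kappa(u)\,\nabla u\cdot\nabla v$ with $\partial_t\kappa(u)=\kappa_x\kappa_u'(u)\partial_t u$, and a term $\int(\kappa(u)-\kappa_0)\nabla\partial_t u\cdot\nabla v$, with the analogous pair for the $P_Hu$ argument. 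Organizing these differences around $I-P_H$ is the delicate bookkeeping: the $\partial_t\kappa(u)$ piece paired with $\|\kappa_x^{1/2}\nabla u\|_{L^{\infty}}$ yields the $\|(I-P_H)\partial_t\kappa(u)\|_{L^2}$ term, while the projection errors of $u$, $\partial_t u$, and $\kappa(u)$ yield the remaining three terms, each closed through (iii)--(iv) and Cauchy--Schwarz in $\|\cdot\|_{\kappa}$. The residual $\tfrac{\delta r_{\delta a,i}^n}{\Delta t}$ I would handle as before: it equals $\tfrac1{\Delta t}a_0\big(P_{H,i}(u^{n+2}-2u^{n+1}+u^n),\cdot\big)=\Delta t\,a_0(P_{H,i}\partial_{tt}u(\xi),\cdot)$ by the second-difference Taylor formula, whence (i) gives $C\Delta t\|\partial_{tt}u\|_{\kappa}$.
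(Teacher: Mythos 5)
Your proposal is correct and follows essentially the same route as the paper: the same term-by-term splitting of each residual into a temporal Taylor remainder plus a $P_H$-projection error, the same cross-term insertion for $r_a^n$ yielding the Lipschitz piece (with the $H$ factor) and the boundedness piece, and the same stability/approximation properties of $P_H$. For $\delta r_a^n/\Delta t$ the paper carries out your ``delicate bookkeeping'' discretely, writing the difference as four explicit terms $J_1,\dots,J_4$ (the $(I-P_H)(\kappa(u^{n+1})-\kappa(u^n))$ piece and the projection errors of $u$, $\partial_t u$ and $\kappa(u)$), which are exactly the four contributions your chain-rule/time-integral argument identifies.
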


\begin{proof}
	By using the properties of $P_{H}$, we have 
	\begin{align*}
		\|\kappa_{x}^{\frac{1}{2}}(I-P_{H})v\| \leq CH\|v\|_{\kappa}, &\;\;\|P_{H,i}v\| \leq C\|v\|_{\kappa}.
	\end{align*}
	We first estimate $r_{\delta}^{n}:=\partial_{t}u^{n}-\cfrac{P_{H}(u^{n+1}-u^{n})}{\Delta t}$
	and have 
	\begin{align*}
		& \|\partial_{t}u^{n}-\cfrac{P_{H}(u^{n+1}-u^{n})}{\Delta t}\|
		= \|\partial_{t}u^{n}-P_{H}(\partial_{t}u^{n})+P_{H}\Big(\partial_{t}u^{n}-\cfrac{u^{n+1}-u^{n}}{\Delta t}\Big)\|\\
		\leq & H\|\partial_{t}u^{n}\|_{\kappa}+\Delta t\|P_{H}\Big(\partial_{tt}u(\xi)\Big)\|
		\leq H\|\partial_{t}u^{n}\|_{\kappa}+C\Delta t\|\partial_{tt}u(\xi)\|_{\kappa}
	\end{align*}
	
	We next estimate $r_{\delta^{2},i}^{n}:=\Delta tP_{H,i}\Big(\cfrac{(-u^{n+1}+2u^{n}-u^{n-1})}{\Delta t^{2}}\Big)$
	and have
	
	\begin{align*}
		\|\Delta tP_{H,i}\Big(\cfrac{(-u^{n+1}+2u^{n}-u^{n-1})}{\Delta t^{2}}\Big)\|
		=  \Delta t\|P_{H,i}\Big(\partial_{tt}u(\xi)\Big)\|
		\leq  C\Delta t\|\partial_{tt}u(\xi)\|_{\kappa}.
	\end{align*}
	
	We then estimate $\|r_{a}^{n}\|_{\kappa^{*}}$ and have
	\begin{align*}
		(r_{a}^{n},v)= & \tilde{a}(u^{n};u^{n},v)-\tilde{a}(Pu^{n};Pu^{n},v)\\
		\leq & C\Big(\|u^{n}-P_{H}u^{n}\|_{\kappa}+\|\nabla u\|_{L^{\infty}([0,T],L^{\infty})}\|\kappa_{x}^{\frac{1}{2}}(u^{n}-P_{H}u^{n})\|_{L^{2}}\Big)\|v\|_{\kappa}\\
		\leq & C(1+H\|\nabla u\|_{L^{\infty}([0,T],L^{\infty})})\|u^{n}-P_{H}u^{n}\|_{\kappa}\|v\|_{\kappa}.
	\end{align*}
	
	Finally, we estimate $\|r_{\delta a,i}^{n}\|_{\kappa^{*}}$ and have
	\begin{align*}
		(r_{\delta a,i}^{n},v) & =\Delta ta_{0}(P_{H.i}(\cfrac{u^{n+1}-u^{n}}{\Delta t}),v) =\Delta ta_{0}(P_{H.i}(\partial_{t}u(\xi)),v)\\
		& \leq\Delta t\|P_{H.i}(\partial_{t}u(\xi))\|_{\kappa}\|v\|_{\kappa} \leq C\Delta t\|\partial_{t}u(\xi)\|_{\kappa}\|v\|_{\kappa}.
	\end{align*}
	Therefore, we have 
	\begin{align*}
		\|r_{a}^{n}\|_{\kappa^{*}} & \leq C(1+H\|\nabla u\|_{L^{\infty}([0,T],L^{\infty})})\|u^{n}-P_{H}u^{n}\|_{\kappa}\\
		\|r_{\delta a,i}^{n}\|_{\kappa^{*}} & \leq C\Delta t\|\partial_{t}u(\xi)\|_{\kappa}.
	\end{align*}
 We next estimate the term $\|\cfrac{\delta r_{\delta a,i}^{n}}{\Delta t}\|_{\kappa^{*}}$
and $\|\cfrac{\delta r_{a}^{n}}{\Delta t}\|_{\kappa^{*}}$. We first
have

\begin{align*}
(\delta r_{\delta a,i}^{n},v) & =\Delta t^{2}a_{0}(P_{H.i}(\cfrac{\delta u^{n+1}-\delta u^{n}}{\Delta t^{2}}),v)
\leq C\Delta t^{2}\|\partial_{tt}u\|_{\kappa}(\xi)\|v\|_{\kappa}.
\end{align*}

By definition of $(\delta r_{a}^{n},v)$, we have 
\begin{align*}
(\delta r_{a}^{n},v)= & \tilde{a}(u^{n+1};u^{n+1},v)-\tilde{a}(u^{n};u^{n},v)\\
 & -\tilde{a}(P_{H}u^{n+1};P_{H}u^{n+1},v)+\tilde{a}(P_{H}u^{n};P_{H}u^{n},v).
\end{align*}
We then separate $(\delta r_{a}^{n},v)$ into four terms such that

\[
(\delta r_{a}^{n},v)=J_{1}+J_{2}+J_{3}+J_{4},
\]
where 
\begin{align*}
J_{1} & =\int_{\Omega}\kappa_{x}(I-P_{H})\Big(\kappa(u^{n+1})-\kappa(u^{n})\Big)\nabla u^{n+1}\cdot\nabla v,\\
J_{2} & =\int_{\Omega}\kappa_{x}P_{H}\Big(\kappa(u^{n+1})-\kappa(u^{n})\Big)\nabla(I-P_{H})u^{n+1}\cdot\nabla v,\\
J_{3} & =\int_{\Omega}(\kappa_{x}\kappa(P_{H}u^{n})-\kappa_{0})\nabla((I-P_{H})\delta u^{n+1})\cdot\nabla v,\\
J_{4} & =\int_{\Omega}(\kappa_{x}(I-P_{H})\kappa(u^{n}))\nabla(\delta u^{n+1})\cdot\nabla v.
\end{align*}

We then have 
\begin{align*}
J_{1} & \leq C\Delta t\|\kappa_{x}^{\frac{1}{2}}\nabla u\|_{L^{\infty}([0,T],L^{\infty})}\|(I-P_{H})\partial_{t}\kappa(u)\|_{L^{\infty}([0,T],L^{2})}\|v\|_{\kappa},\\
J_{2} & \leq C\Delta t\|P_{H}\Big(\partial_{t}\kappa(u)\Big)\|_{L^{\infty}([0,T],L^{\infty})}\|(I-P_{H})u\|_{L^{\infty}([0,T],\kappa)}\|v\|_{\kappa},\\
J_{3} & \leq C\Delta t\|(I-P_{H})\partial_{t}u\|_{L^{\infty}([0,T],\kappa)}\|v\|_{\kappa},\\
J_{4} & \leq C\Delta t\|\kappa_{x}^{\frac{1}{2}}\nabla u_{t}\|_{L^{\infty}([0,T],L^{\infty})}\|(I-P_{H})\kappa(u)\|_{L^{\infty}([0,T],\kappa)}\|v\|_{\kappa}.
\end{align*}

\end{proof}

\section{Numerical examples} \label{sec:numerical}
In this section, we will present some numerical tests and demonstrate the performance of the proposed algorithm.
The problem is posed on a square domain $\Omega$ and the simulation time interval is $[0,T]$. 

\subsection{Example 1}\label{subsec:num_ex1}
In the first example, we solve the following nonlinear equation 
\begin{equation*}
	\begin{aligned}
		u_t - \nabla \cdot (\kappa_0 \exp(\beta u)  \nabla u) &= f \;\;\;\; \text{ on } \Omega\times (0,T] \\
		\nabla u\cdot \boldsymbol{n} &= 0 \;\;\;\; \text{ on }  \partial \Omega  \times (0,T]
	\end{aligned}
\end{equation*}
with initial condition $u(0)= 0$. In the above equation, $\Omega = [0,1]\times[0,1]$, $T=0.1$, and $\beta=1$. The source term $f=1$ at $(x,y)=(0.31,0.11)$ and $f=0$ elsewhere in the domain. $\kappa_0$ is a channelized permeability field, The value of permeability is $10^4$ inside channels, and is $1$ outside the channels. The configuration of the permeability field is shown in Figure \ref{fig:source_perm_ex1}. 

The time step size is $\Delta t =5\times 10^{-5}$. To compute reference solutions, we use piecewise linear finite elements on a $100\times 100$ grid for the spatial discretization. Newton iterations are performed at each time step, and the iteration stops when the norm of the update is less than $10^{-10}$. For coarse level approximations, the coarse grid is $10\times 10$, and we compute NLMC and enriched NLMC basis (ENLMC) with respect to $\kappa_0$ as discussed in Section \ref{sec:spaces}. The degrees of freedom of fine grid approximation is $10201$, and the degrees of freedom of coarse grid approximation is listed in the table for NLMC and ENLMC.

We then compute the following approximations of the solution: (1) fine grid approximation with implicit Euler \eqref{eq:fine_implicit}-\eqref{eq:NR}, (2) coarse scale approximation with NLMC basis and implicit Euler, (3) coarse scale approximation with ENLMC basis and implicit Euler, (4) partially explicit scheme \eqref{eq:partial_exp} with EIN, which is our proposed method. We compare the errors and running time of (2)-(4) with the reference (1). The errors are presented in Figure \ref{tab:ex1_err1}. From Figure \ref{fig:errs_ex1}, we observe that our proposed method can give similar results as (2) and (3). However, our approach takes less computational time, see Table \ref{tab:ex1_time}.

We also show the profiles of the solution obtained from our proposed approach and the reference solution at the second time step and final times step in Figure \ref{fig:sol_ex1}. 

\begin{figure}
	\centering
	\includegraphics[width=0.8\textwidth]{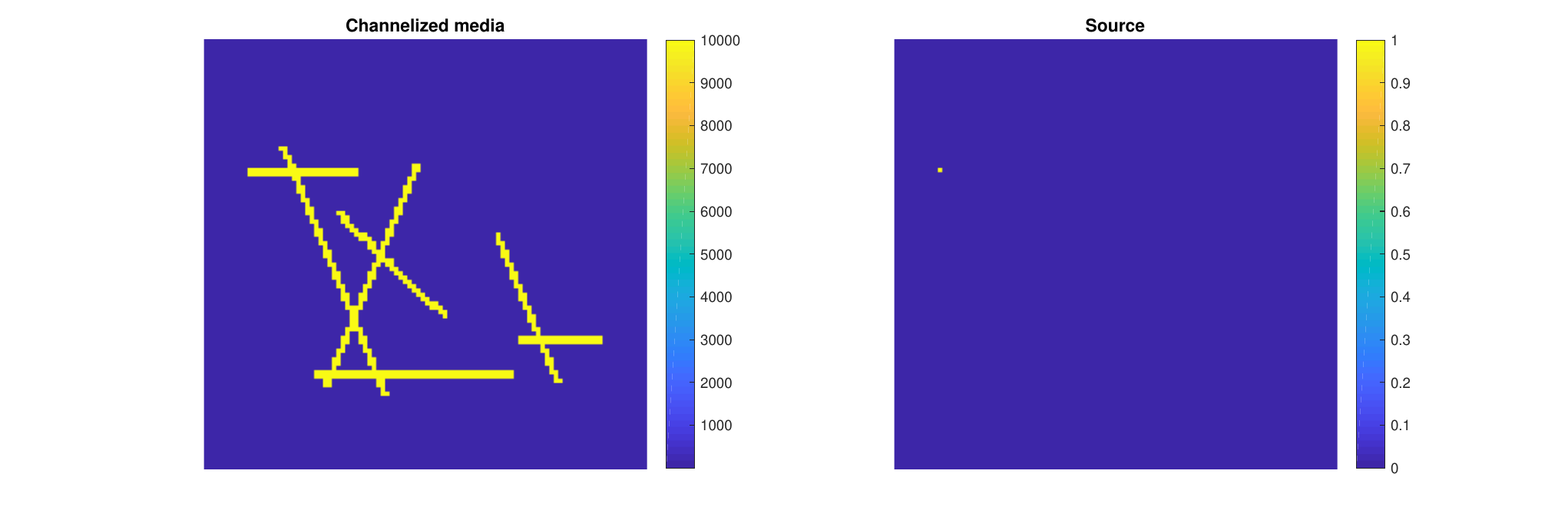}
	\caption{Example 1, left: the channelized permeability field, right: the source term.}
	\label{fig:source_perm_ex1}
\end{figure}

\begin{figure}
	\centering
	\includegraphics[width=0.8\textwidth]{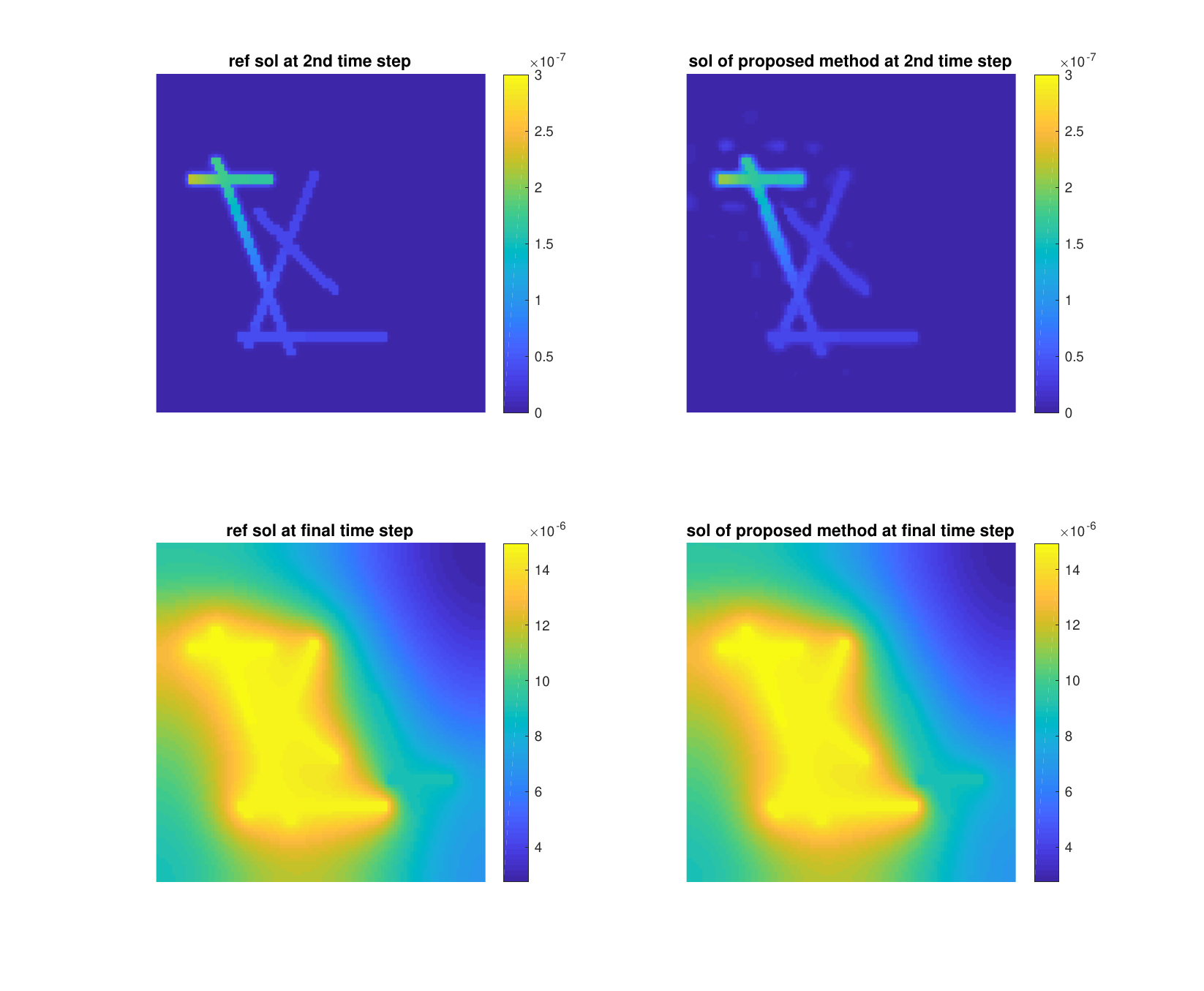}
	\caption{Example 1, the comparison of solutions at different time steps. Left: reference solutions, right: solutions obtained from the proposed algorithm with NLMC basis.}
	\label{fig:sol_ex1}
\end{figure}

\begin{figure}
	\centering
	\includegraphics[width=0.45\textwidth]{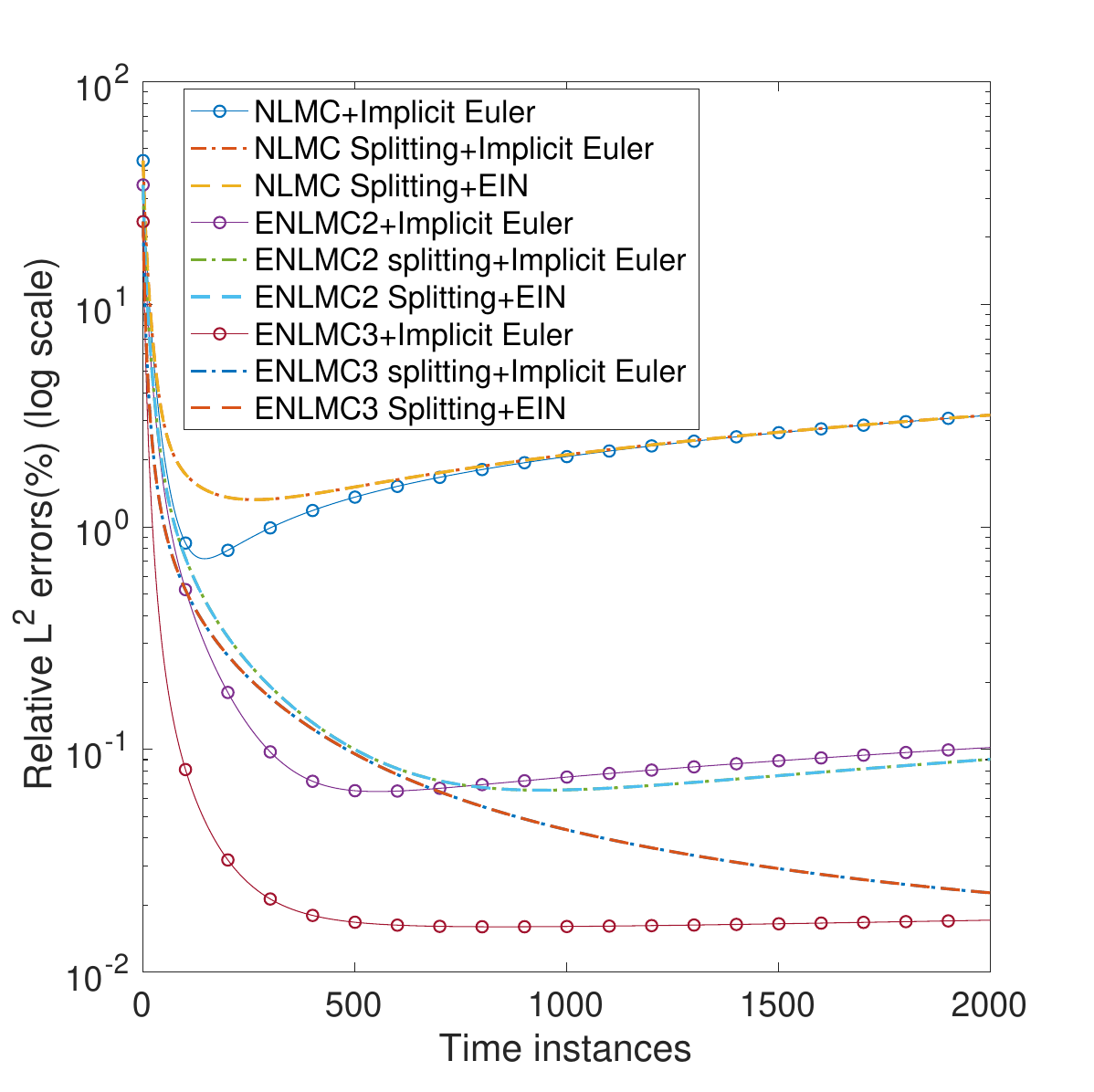}
	\includegraphics[width=0.45\textwidth]{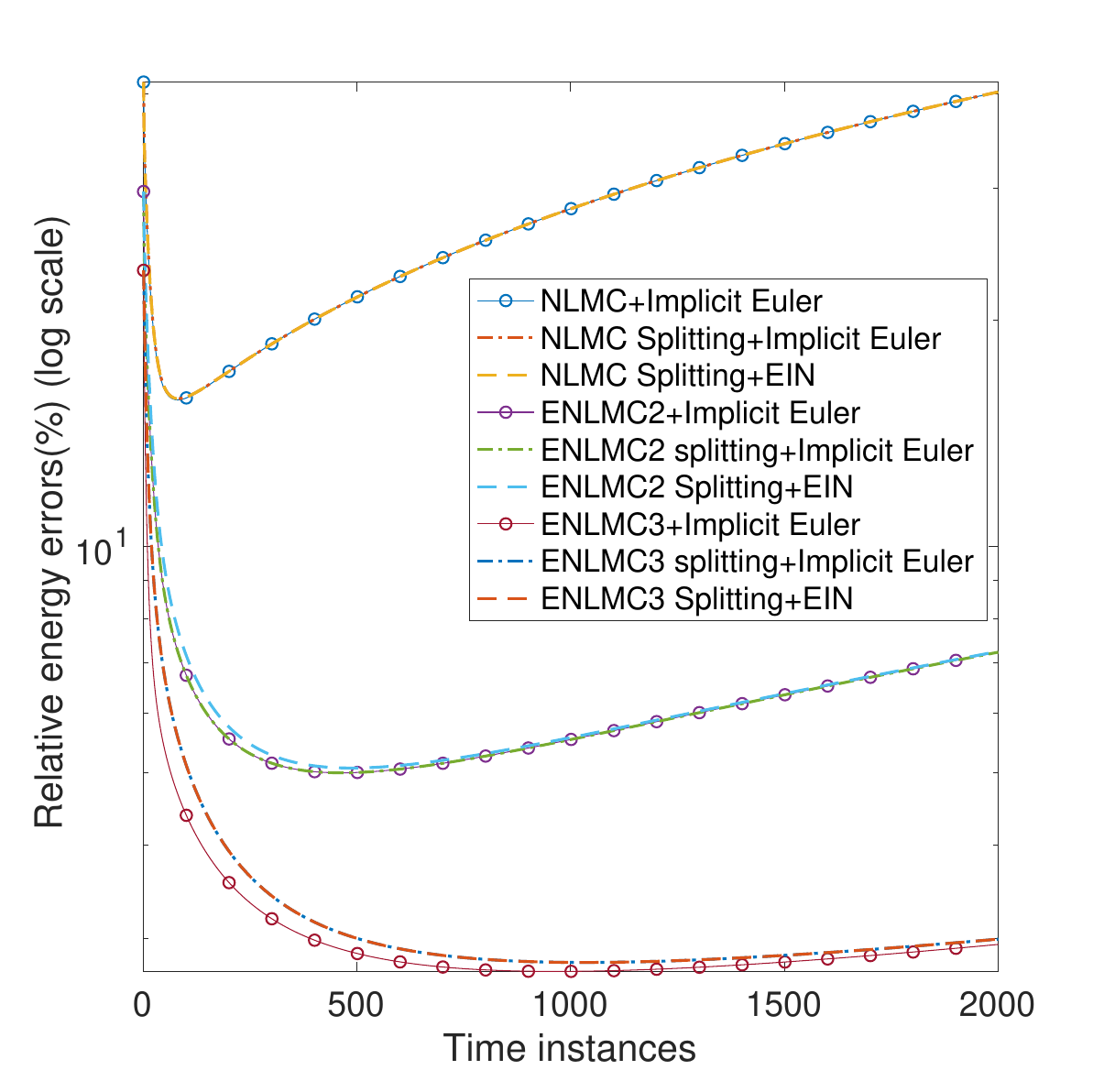}
	\caption{Example 1, error history between the reference solution and the approximation solutions.}
	\label{fig:errs_ex1} 
\end{figure}

\begin{table}[!htb]
	\centering
	\begin{tabular}{|c|c|c|c|}\hline
		no. of basis($V_{H,1}/V_{H,2}$) &DOF($V_{H,1}/V_{H,2}$)&\makebox[5em]{$L_2$ error(\%)}&\makebox[5em]{Energy error(\%)}  \\\hline
		NLMC 1/1 &26/100 &2.38 &28.01   \\\hline  
		ENLMC 2/2  &78/300  &0.31 &6.17\\\hline
		ENLMC 3/3  &104/400  &0.19  &3.27 \\\hline
	\end{tabular}
	\caption{Example 1, relative $L_2$ and energy errors (average over all time steps) between the reference solutions and the solutions from the EIN splitting scheme.  }
	\label{tab:ex1_err1}
\end{table}

\begin{table}[!htb]
	\centering
	\begin{tabular}{|c|c|c|c|}\hline
		&No splitting+IE &Splitting+IE &Splitting+EIN  \\\hline
		NLMC 1+1   &11.50 &5.56 &3.94   \\\hline  
		ENLMC 2+2  &20.48 &18.15 &8.08 \\\hline
		ENLMC 3+3  &34.78 &31.67  &15.66 \\\hline	
	\end{tabular}
	\caption{Example 1, the running time (in seconds) for different cases (IE refers to implicit Euler).}
	\label{tab:ex1_time}
\end{table}

\subsection{Example 2}
In the second example, we consider a compressible fluid flow problem
\begin{equation*}
	\begin{aligned}
		\partial_t(\phi \rho(u)) - \nabla \cdot (\frac{\kappa}{\mu} \rho(u)  \nabla u) &= f \;\;\;\; \text{ on } \Omega\times (0,T] \\
		\nabla u\cdot \boldsymbol{n} &= 0 \;\;\;\; \text{ on }  \partial \Omega  \times (0,T]\\
		u &= u_0 \;\;\;\; \text{ on }   \Omega  \times \{t=0\}
	\end{aligned}
\end{equation*}
where the computational domain $\Omega = [0,64m]\times [0,64m]$, $T=25.2$ minutes. The porosity $\phi = 500$, viscosity $\mu = 5cP$, and compressibility parameter $c = 10^{-8} Pa^{-1}$. The initial pressure $u_0 = 2.16 \times 10^7 Pa$. We take zero Neumann boundary condition on the whole boundary of the computational domain.

The fluid density 
\begin{equation*}
	\rho(u)  = \rho_{\text{ref}} e^{c(u-u_{\text{ref}})},
\end{equation*}
where $\rho_{\text{ref}} = 850 kg/m^3$ is the reference density, and $u_{\text{ref}} = 2\times 10^{7} Pa$ is the reference pressure.

The time step size is $\Delta t = 0.6048$ seconds. As in the previous example, the reference solutions is computed on a $100\times 100$ grid, using implicit Euler with Newton iterations. The coarse grid is $10\times 10$, and the degrees of freedom of coarse grid approximation are listed in Tables \ref{tab:ex2_err1} for NLMC and ENLMC approaches. The absolute permeability $\kappa$ in this example is shown in Figure \ref{fig:source_perm_ex2}, the values in the yellow region (channels) are $10^3$, and the values in the blue region is $1$.

As before, we compare the four approximation solutions described in section \ref{subsec:num_ex1} with the reference solution. From Figure \ref{fig:errs_ex2}, we observe that our proposed method can give similar results as the case for no splitting NLMC/ENLMC multiscale method with standard implicit Euler for time discretization. However, our approach takes less computational time. The errors and running time are presented in Table \ref{tab:ex2_err1} and Table \ref{tab:ex2_time}.Our MATLAB code is run on a macOS, and the recorded time shows that with the proposed EIN splitting approach, the computational cost can be saved significantly.

We also show the profiles of the solution obtained from our proposed approach with 4 ENLMC basis in each local region (2 for $V_{H,1}$ and 2 for $V_{H,2}$), and the reference solutions, at the second time step and final times step in Figure \ref{fig:sol_ex2}. The figure shows that our approach gives very good approximation results.

\begin{figure}
\centering
\includegraphics[width=0.8\textwidth]{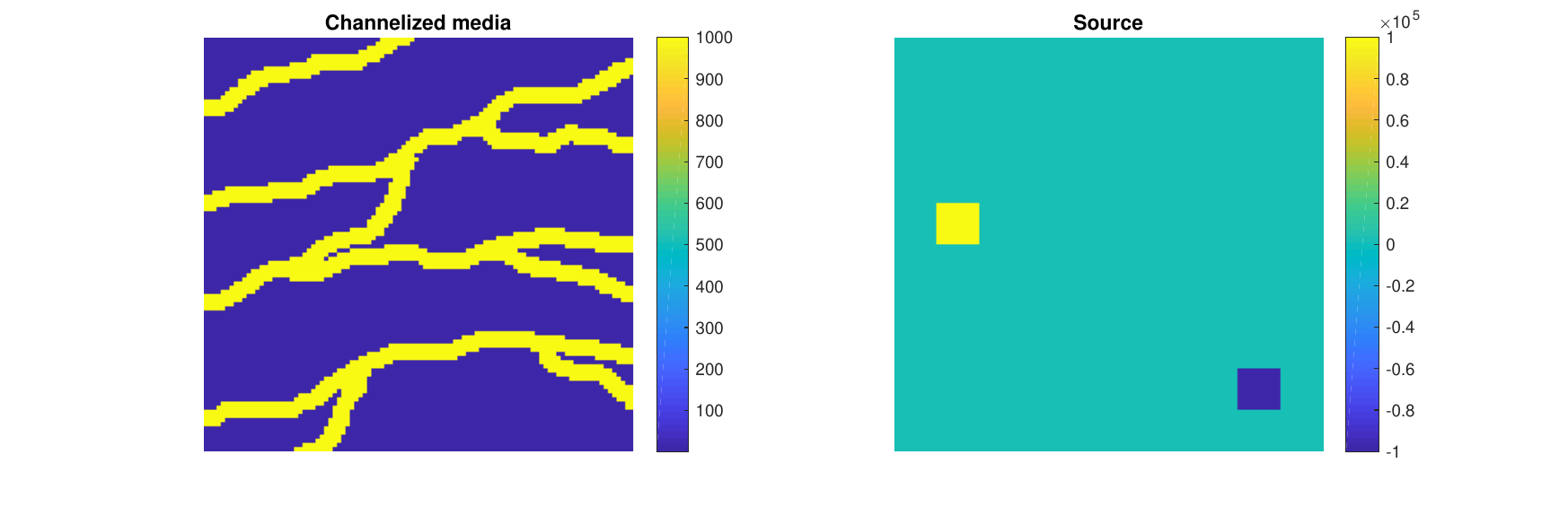}
\caption{Example 2, left: the channelized permeability field, right: the source term.}
\label{fig:source_perm_ex2}
\end{figure}

\begin{figure}
\centering
\includegraphics[width=0.8\textwidth]{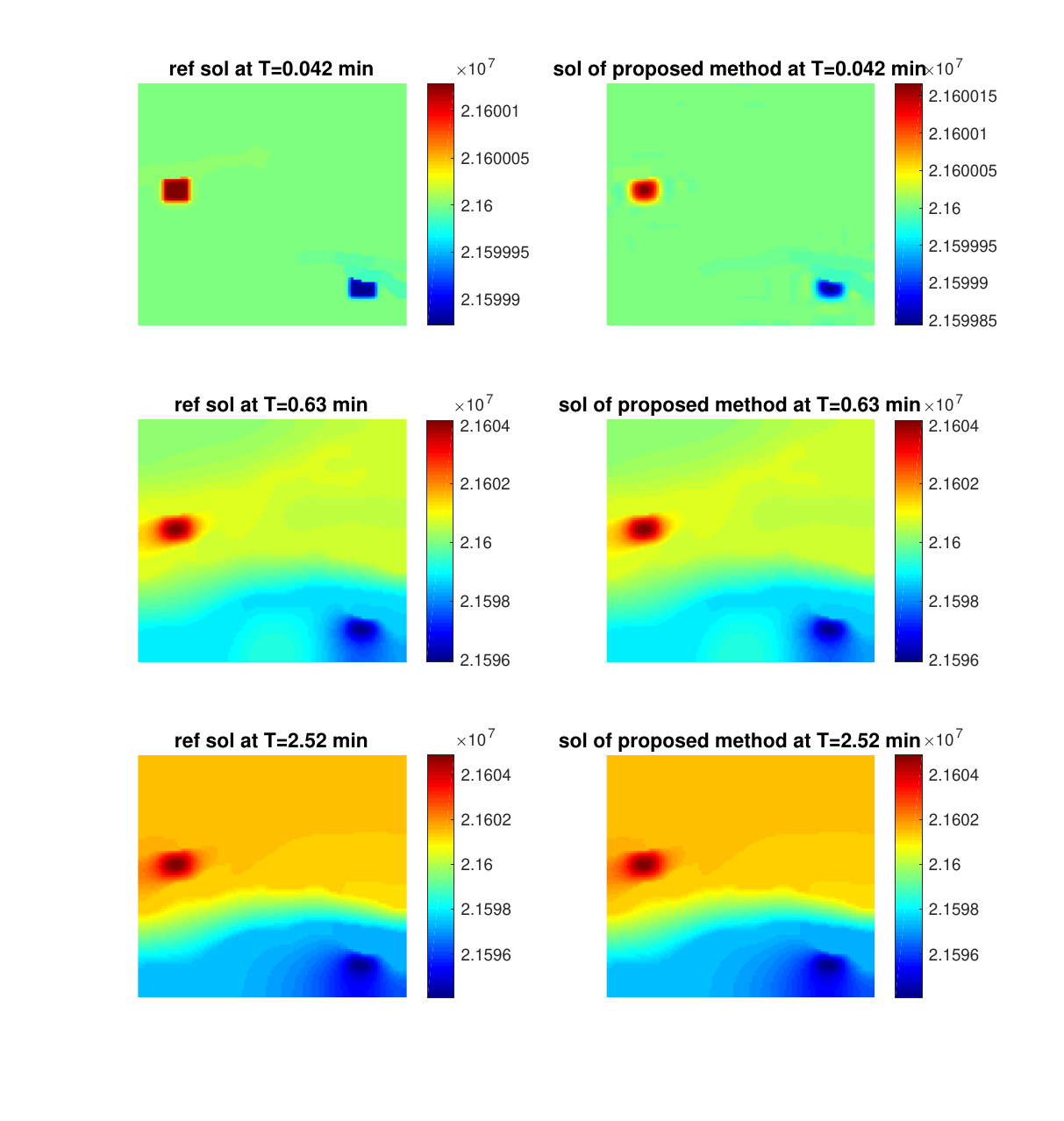}
\caption{Example 2, the comparison of solutions at different time steps. Left: reference solutions, right: solutions obtained from the proposed algorithm.}
\label{fig:sol_ex2}
\end{figure}

\begin{figure}
\centering
\includegraphics[width=0.45\textwidth]{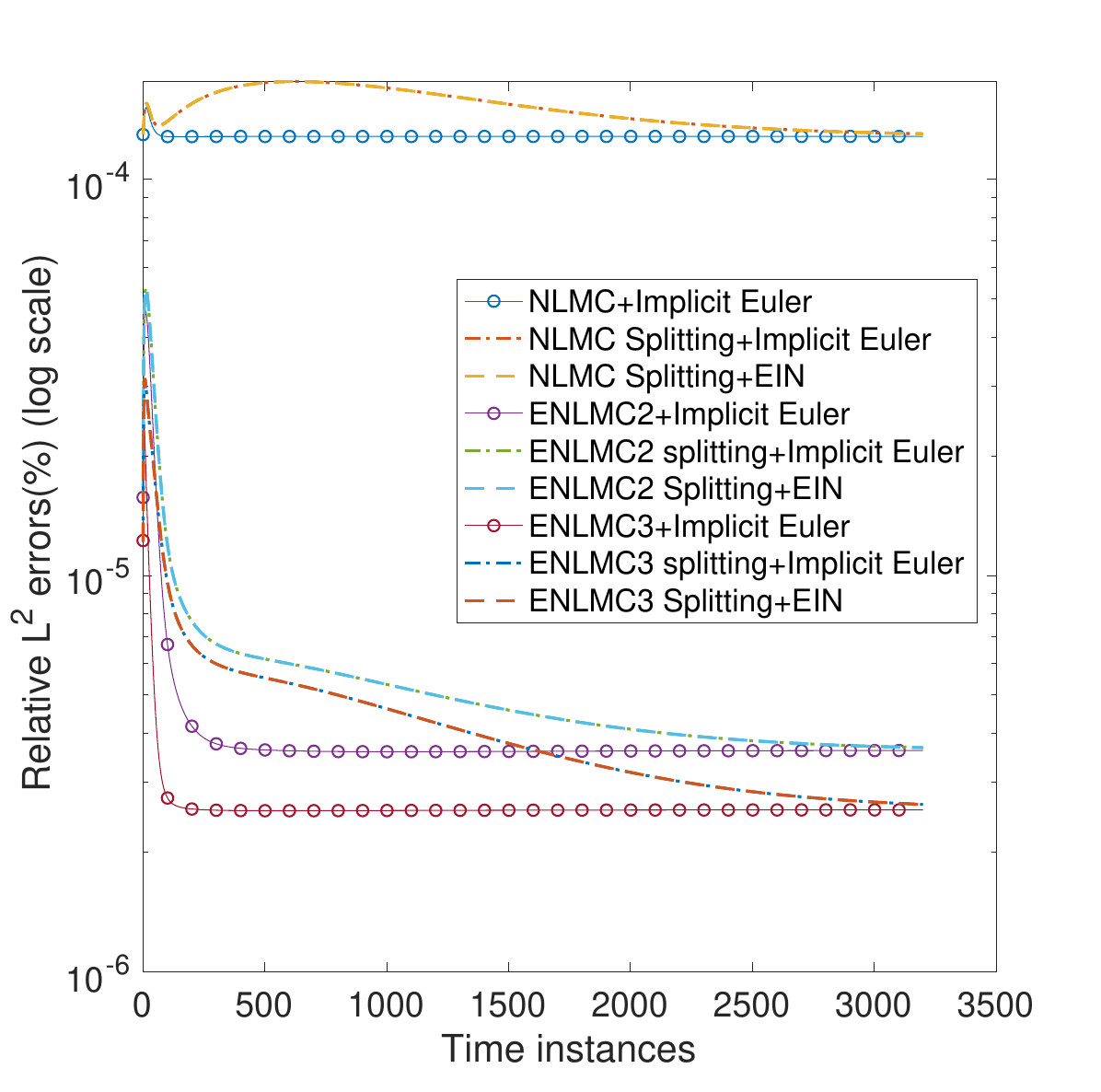}
\includegraphics[width=0.45\textwidth]{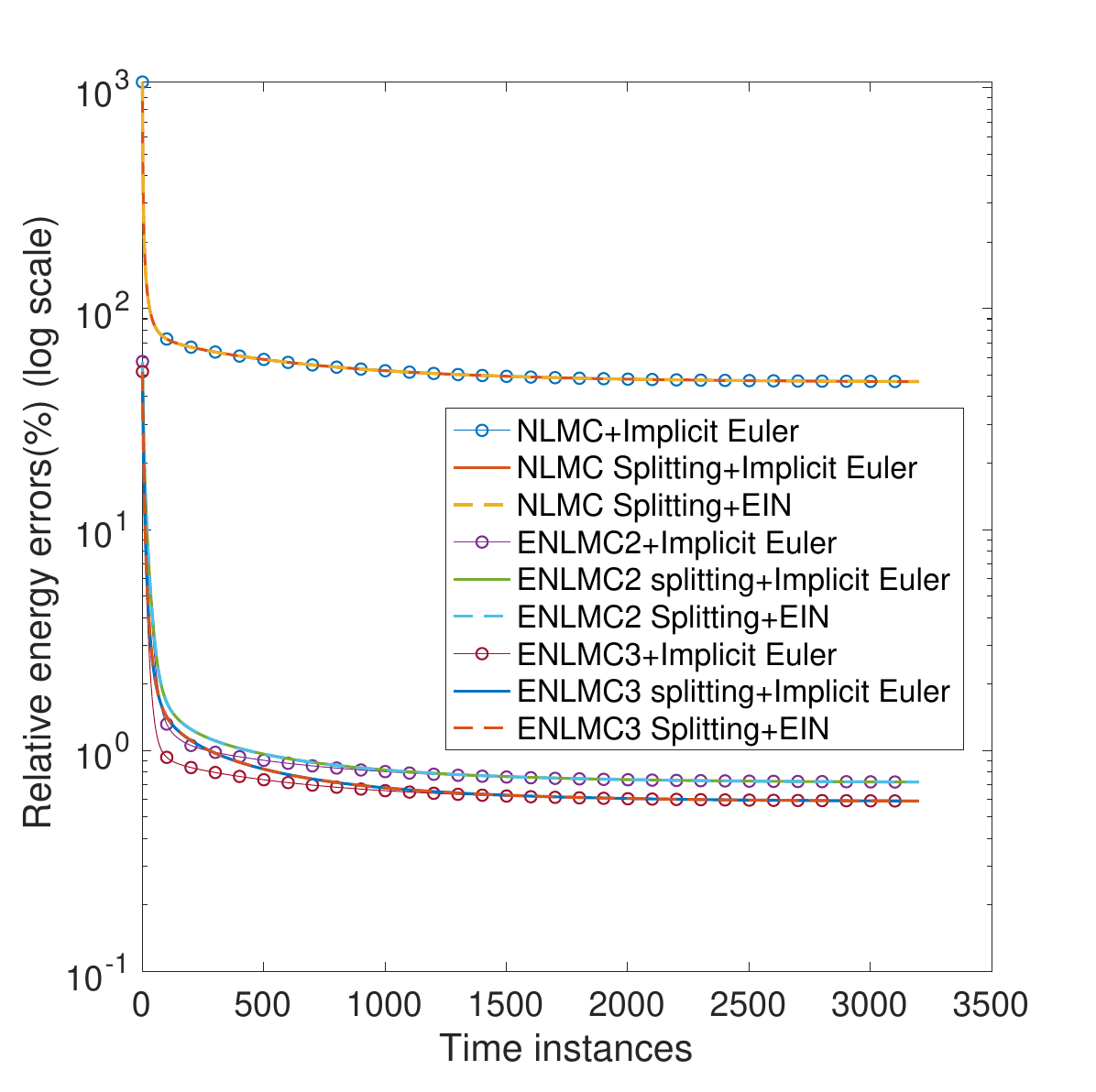}
\caption{Example 2, error history between the reference solution and the approximation solutions.}
\label{fig:errs_ex2} 
\end{figure}

\begin{table}[!htb]
\centering
\begin{tabular}{|c|c|c|c|c|}\hline
	no. of basis ($V_{H,1}/V_{H,2}$) &DOF($V_{H,1}/V_{H,2}$)&\makebox[5em]{$L_2$ error(\%)}&\makebox[5em]{Energy error(\%)}  \\\hline
	NLMC 1/1 &60/100 &1.56 E{-4} &53.59   \\\hline  
	ENLMC 2/2  &120/200  &5.59 E{-6}  & 1.02\\\hline
	ENLMC 3/3  &180/300  &4.41 E{-6} &0.82 \\\hline
\end{tabular}
\caption{Example 2, relative $L_2$ and energy errors (average over all time steps) between the reference solutions and the solutions from the EIN splitting scheme.  }
\label{tab:ex2_err1}
\end{table}

\begin{table}[!htb]
\centering
\begin{tabular}{|c|c|c|c|}\hline
	&No splitting+IE&  Splitting+IE &Splitting+EIN  \\\hline
	NLMC 1+1  & 270.44 & 254.80  &7.51   \\\hline  
	ENLMC 2+2   &509.26 &466.09 &20.09 \\\hline
	ENLMC 3+3   &967.02 &727.37 &35.85 \\\hline
\end{tabular}
\caption{Example 2, the running time (in seconds) for different cases. The running time for the reference solution is 13216.623606 seconds (IE refers to implicit Euler).}
\label{tab:ex2_time}
\end{table}

\section{Conclusion} \label{sec:conclusion}
In this work, we proposed a partially explicit splitting scheme with explicit-implicit-null method for nonlinear multiscale diffusion problems. The explicit-implicit-null approach is to add and substract a linear diffusion term in the equation. However, compared to other works, the damping term we introduced is not a constant, but a dominant multiscale coefficient in order to capture the underlying property. We then proposed the partially explicit splitting scheme for the linear multiscale part of the new equation. This further improves the efficiency of temporal discretization. The stability of the proposed scheme is presented. The efficiency of the proposed scheme are shown in the application of Richard's equation and compressible multiscale flow problems.


\bibliographystyle{plain} 
\bibliography{references}

\appendix
\section{Proof of Lemma \ref{lem:error_a} and \ref{lem:error_L2}} 

In this section, we will discuss the proof of Lemma \ref{lem:error_a} and \ref{lem:error_L2} in detail. First of all, we will first introduce the proof for Lemma \ref{lem:error_a}. 
We will first estimate the consistence of the temporal discretization.
	Given $u$ is the solution of (\ref{eq:weak_eq}), we have
	\begin{align*}
		(\partial_{t}u^{n},v)+a(u^{n};u^{n},v) & =(\partial_{t}u^{n},v)+a_{0}(u^{n},v)+\tilde{a}(u^{n};u^{n},v)
		=(f^{n},v)\;\forall v\in V.
	\end{align*}
	We then estimate the  discretization error of term $\partial_t u$, and consider
	\begin{align*}
		\cfrac{P_{H,1}(\delta u^{n+1})+P_{H,2}(\delta u^{n})}{\Delta t} & =\cfrac{P_{H}(\delta u^{n+1})+P_{H,2}(-u^{n+1}+2u^{n}-u^{n-1})}{\Delta t}\\
		& =\partial_{t}u^{n}-r_{\delta}^{n}+r_{\delta^{2},2}^{n}.
	\end{align*}
	Similarly, we have 
		$\cfrac{P_{H,2}(\delta u^{n+1})+P_{H,1}(\delta u^{n})}{\Delta t} =\partial_{t}u^{n}-r_{\delta}^{n}+r_{\delta^{2},1}^{n}.$
	
	Thus, we obtain
	\begin{align*}
		& (\cfrac{P_{H,i}(\delta u^{n+1})+P_{H,j}(\delta u^{n})}{\Delta t}+r_{\delta}^{n}-r_{\delta^{2},j}^{n},v)\\
		& +a_{0}(P_{H,1}(u^{n+1})+P_{H,2}(u^{n}),v)+(r_{\delta a,2}^{n},v) +\tilde{a}(Pu^{n};Pu^{n},v)+(r_{a}^{n},v)\\
		= & (\partial_{t}u^{n},v)+a_{0}(u^{n},v)+\tilde{a}(u^{n};u^{n},v)=(f^{n},v)\;\forall v\in V_{H,i},\;i=1,2,\;j=\text{mod}(i,2)+1.
	\end{align*}
	For the error terms $\eta_{i}^{n}=P_{H,i}u^{n}-u_{H,i}^{n}$ 
	and $\eta^{n}=\sum_{i=1,2}\eta_{i}^{n}$, we have
	\begin{align*}
		&\tilde{a}(\eta^{n};\eta^{n},v)= a(\eta^{n};\eta^{n},v)-a_{0}(\eta^{n},v)\\
		= & a(\eta^{n};\eta^{n},v)-a(P_{H}u^{n};\eta^{n},v)-a(P_{H}u^{n};u_{H}^{n},v)+a(u_{H}^{n};u_{H}^{n},v)\\
		& +\tilde{a}(P_{H}u^{n};P_{H}u^{n},v)-\tilde{a}(u_{H}^{n};u_{H}^{n},v).
	\end{align*}
	Therefore, we have 
	\begin{align}
		(\cfrac{\delta\eta_{1}^{n+1}+\delta\eta_{2}^{n}}{\Delta t},v)+a_{0}(\eta_{1}^{n+1}+\eta_{2}^{n},v)+b(P_{H}u^{n},u_H^n,v) & =(r_{1}^{n},v)\;\forall v\in V_{H,1}, \label{eq:residual_1} \\ 
		(\cfrac{\delta\eta_{2}^{n+1}+\delta\eta_{1}^{n}}{\Delta t},v)+a_{0}(\eta_{1}^{n+1}+\eta_{2}^{n},v)+ b(P_{H}u^{n},u_H^n,v)& =(r_{2}^{n},v)\;\forall v\in V_{H,2},\label{eq:residual_2}
	\end{align}
	where $r_{1}^{n}:=-r_{\delta}^{n}+r_{\delta^{2},2}^{n}-r_{\delta a,2}^{n}-r_{a}^{n}$,
	$r_{2}^{n}:=-r_{\delta}^{n}+r_{\delta^{2},1}^{n}-r_{\delta a,2}^{n}-r_{a}^{n}$ and 
	\[
	b(P_{H}u^{n},u_H^n,v) := \tilde{a}(P_{H}u^{n};P_{H}u^{n},v)-\tilde{a}(u_{H}^{n};u_{H}^{n},v)
	\]
	
	Putting $v=\delta\eta_{1}^{n+1}$ in (\ref{eq:residual_1}) and $v=\delta\eta_{2}^{n+1}$ in (\ref{eq:residual_2}) and summing up these two equations, we obtain 
	\begin{align}
		(\cfrac{\delta\eta_{1}^{n+1}+\delta\eta_{2}^{n}}{\Delta t},\delta\eta_{1}^{n+1})+(\cfrac{\delta\eta_{2}^{n+1}+\delta\eta_{1}^{n}}{\Delta t},\delta\eta_{2}^{n+1}) \label{eq:delta_sum_eq_1}\\ 
		+a_{0}(\eta_{1}^{n+1}+\eta_{2}^{n},\delta\eta^{n+1})+b(P_{H}u^{n},u_H^n,\delta\eta^{n+1}) & =(r_{1}^{n},\delta\eta_{1}^{n+1})+(r_{2}^{n},\delta\eta_{2}^{n+1}).\label{eq:delta_sum_eq_2}
	\end{align}	
	We estimate the terms in (\ref{eq:delta_sum_eq_1}) using the same calculation as the proof of Lemma \ref{lemma1}
	\begin{align*}
		& (\cfrac{\delta\eta_{1}^{n+1}+\delta\eta_{2}^{n}}{\Delta t},\delta\eta_{1}^{n+1})+(\cfrac{\delta\eta_{2}^{n+1}+\delta\eta_{1}^{n}}{\Delta t},\delta\eta_{2}^{n+1}) \\
		\geq &\frac{1}{\Delta t} \sum_{i=1,2}\Big(\Big(1-\cfrac{\gamma}{2}\Big)\|\delta\eta_{i}^{n+1}\|^{2}-\cfrac{\gamma}{2}\|\delta\eta_{i}^{n}\|^{2}\Big).
	\end{align*}
	For the last two terms in the left hand side of 
	(\ref{eq:delta_sum_eq_2}), we have
	\begin{align*}
		& a_{0}(\eta_{1}^{n+1}+\eta_{2}^{n},\delta\eta^{n+1})+\tilde{a}(P_{H}u^{n};P_{H}u^{n},\delta\eta^{n+1})-\tilde{a}(u_{H}^{n};u_{H}^{n},\delta\eta^{n+1})\\
		= & a_{0}(\eta_{1}^{n+1}+\eta_{2}^{n},\delta\eta^{n+1})-a_{0}(\eta^{n},\delta\eta^{n+1})+a(P_{H}u^{n};P_{H}u^{n},\delta\eta^{n+1})-a(u_{H}^{n};u_{H}^{n},\delta\eta^{n+1})\\
		= & a_{0}(\eta^{n+1},\delta\eta^{n+1})-a_{0}(\delta\eta_{2}^{n+1},\delta\eta^{n+1})+ A_1,
	\end{align*}
	where $A_1 = a(P_{H}u^{n};P_{H}u^{n},\delta\eta^{n+1})-a(u_{H}^{n};u_{H}^{n},\delta\eta^{n+1})-a_{0}(\eta^{n},\delta\eta^{n+1})$.
	
	We then estimate the term $A_1$,
	and obtain
	\begin{align*}
		& a(P_{H}u^{n};P_{H}u^{n},\delta\eta^{n+1})-a(u_{H}^{n};u_{H}^{n},\delta\eta^{n+1})-a_{0}(\eta^{n},\delta\eta^{n+1})\\
		= & \int_{\Omega}\kappa_{x}\Big(\kappa_{u}(P_{H}u^{n})-\kappa_{u}(u^{n})\Big)\nabla u_{H}^{n}\cdot\nabla\delta\eta^{n+1}+\int_{\Omega}\Big(\kappa_{x}\kappa_{u}(P_{H}u^{n})-\kappa_{0}\Big)\nabla\eta^{n}\cdot\nabla\delta\eta^{n+1}\\
		= & I_{1}+\int_{\Omega}(\cfrac{\kappa_{x}\kappa_{u}(P_{H}u^{n})-\kappa_{0}}{2})\Big(|\nabla\eta^{n+1}|^{2}-|\nabla\eta^{n}|^{2}-|\nabla\delta\eta^{n+1}|^{2}\Big)
	\end{align*}
	where $I_{1}:=\int_{\Omega}\kappa_{x}\Big(\kappa_{u}(P_{H}u^{n})-\kappa_{u}(u^{n})\Big)\nabla u_{H}^{n}\cdot\nabla\delta\eta^{n+1}.$
	
	Since $a_{0}(\eta^{n+1},\delta\eta^{n+1})=\int_{\Omega}\cfrac{\kappa_{0}}{2}\Big(|\nabla\eta^{n+1}|^{2}-|\nabla\eta^{n}|^{2}+|\nabla\delta\eta^{n+1}|^{2}\Big)$,
	we have 
	\begin{align*}
		& a_{0}(\eta^{n+1},\delta\eta^{n+1})-a_{0}(\delta\eta_{2}^{n+1},\delta\eta^{n+1})+A_1\\
		= & I_{1}+\int_{\Omega}(\cfrac{\kappa_{x}\kappa_{u}(P_{H}u^{n})}{2})\Big(|\nabla\eta^{n+1}|^{2}-|\nabla\eta^{n}|^{2}\Big)\\
		&+ \int_{\Omega}(\cfrac{2\kappa_{0}-\kappa_{x}\kappa_{u}(P_{H}u^{n})}{2})|\nabla\delta\eta^{n+1}|^{2}-\int_{\Omega}\Big(\kappa_{0}\nabla\delta\eta_{2}^{n+1}\Big)\cdot\nabla\delta\eta^{n+1}
	\end{align*}
	
	Since $|\cfrac{\kappa_{0}-\kappa_{x}\kappa_{u}(P_{H}u^{n})}{\kappa_{0}}|\leq C_{1}$,
	we have $\cfrac{2\kappa_{0}-\kappa_{x}\kappa_{u}(P_{H}u^{n})}{2}\geq\cfrac{1-C_{1}}{2}\kappa_{0}$
	and 
	\begin{align*}
		& \int_{\Omega}(\cfrac{2\kappa_{0}-\kappa_{x}\kappa_{u}(P_{H}u^{n})}{2})|\nabla\delta\eta^{n+1}|^{2}-\int_{\Omega}\Big(\kappa_{0}\nabla\delta\eta_{2}^{n+1}\Big)\cdot\nabla\delta\eta^{n+1}\\
		\geq & \cfrac{-1}{(1-C_{1})}\int_{\Omega}\kappa_{0}|\nabla\delta\eta_{2}^{n+1}|^{2}+(\cfrac{1-C_{1}}{4})\int_{\Omega}\kappa_{0}|\nabla\delta\eta^{n+1}|^{2}.
	\end{align*}
	Combining the equations, we obtain
	\begin{align*}
		& \Delta t^{-1}\sum_{i=1,2}\Big(\Big(1-\gamma+\cfrac{\gamma}{2}\Big)\|\delta\eta_{i}^{n+1}\|^{2}-\cfrac{\gamma}{2}\|\delta\eta_{i}^{n}\|^{2}\Big)-\cfrac{1}{(1-C_{1})}\int_{\Omega}\kappa_{0}|\nabla\delta\eta_{2}^{n+1}|^{2}\\
		& +(\cfrac{1-C_{1}}{4})\|\delta\eta^{n+1}\|_{\kappa}^{2}+I_{1}+\int_{\Omega}(\cfrac{\kappa_{x}\kappa_{u}(P_{H}u^{n})}{2})\Big(|\nabla\eta^{n+1}|^{2}-|\nabla\eta^{n}|^{2}\Big)\\
		\leq & (\cfrac{\delta\eta_{1}^{n+1}+\delta\eta_{2}^{n}}{\Delta t},\delta\eta_{1}^{n+1})+(\cfrac{\delta\eta_{2}^{n+1}+\delta\eta_{1}^{n}}{\Delta t},\delta\eta_{2}^{n+1})+I_{1}\\
		& +\int_{\Omega}(\cfrac{\kappa_{x}\kappa_{u}(P_{H}u^{n})}{2})\Big(|\nabla\eta^{n+1}|^{2}-|\nabla\eta^{n}|^{2}\Big)+\int_{\Omega}(\cfrac{2\kappa_{0}-\kappa_{x}\kappa_{u}(P_{H}u^{n})}{2})|\nabla\delta\eta^{n+1}|^{2}\\
		\leq & (r_{1}^{n},\delta\eta_{1}^{n+1})+(r_{2}^{n},\delta\eta_{2}^{n+1}).
	\end{align*}
	Since $\sup_{v_{2}\in V_{H,2}\backslash\{0\}}\cfrac{\|v_{2}\|_{\kappa}^{2}}{\|v_{2}\|^{2}}\leq\cfrac{(1-\gamma)(1-C_{1})}{2\Delta t}$,
	we have 
	\begin{align*}
		& \Delta t^{-1}\sum_{i=1,2}\Big(\cfrac{1-\gamma}{2}+\cfrac{\gamma}{2}\Big)\|\delta\eta_{i}^{n+1}\|^{2}+\int_{\Omega}(\cfrac{\kappa_{x}\kappa_{u}(P_{H}u^{n})}{2})|\nabla\eta^{n+1}|^{2}+(\cfrac{1-C_{1}}{4})\|\delta\eta^{n+1}\|_{\kappa}^{2}\\
		\leq & \Delta t^{-1}\sum_{i=1,2}\cfrac{\gamma}{2}\|\delta\eta_{i}^{n}\|^{2}+\int_{\Omega}(\cfrac{\kappa_{x}\kappa_{u}(P_{H}u^{n})}{2})|\nabla\eta^{n}|^{2}-I_{1}+(r_{1}^{n},\delta\eta_{1}^{n+1})+(r_{2}^{n},\delta\eta_{2}^{n+1}).
	\end{align*}
	
	We next estimate the terms $I_{1}$ 
	\begin{align*}
		|I_{1}| & \leq C_{u'}\int_{\Omega}\kappa_{x}|\eta^{n+1}||\nabla u_{H}^{n}||\nabla\delta\eta^{n+1}|
		 \leq F_{1}\|\eta^{n+1}\|^{2}+(\cfrac{1-C_{1}}{8})\int_{\Omega}\kappa_{0}|\nabla\delta\eta^{n+1}|^{2}
	\end{align*}
	where we use the Lipschitz continuity of $\kappa_u$, and $F_{1}=\cfrac{2C_{0}^{2}C_{u'}^{2}}{(1-C_{1})\underline{\kappa}_{u}}$.
	Furthermore, 
	\begin{align*}
		 & \Big|\int_{\Omega}\kappa_{x}(\kappa_{u}(P_{H}u^{n+1})-\kappa_{u}(P_{H}u^{n}))|\nabla\eta^{n+1}|^{2}\Big|\\
		\leq & C_{u'}\int_{\Omega}\kappa_{x}|P_{H}(\delta u^{n+1})||\nabla\eta^{n+1}|^{2}
		\leq  \Delta t D_1\|(\kappa_{x}\kappa_{u}(P_{H}u^{n+1}))^{\frac{1}{2}}\nabla\eta^{n+1}\|^2_{L^{2}},
	\end{align*}
 where $D_1=C_0 C_{u'}$
	Therefore, we have
	\begin{align*}
		& \frac{1}{\Delta t}\sum_{i=1,2}\Big(\cfrac{1-\gamma}{2}+\cfrac{\gamma}{2}\Big)\|\delta\eta_{i}^{n+1}\|^{2}+\int_{\Omega}(\cfrac{\kappa_{x}\kappa_{u}(P_{H}u^{n+1})}{2})|\nabla\eta^{n+1}|^{2}+(\cfrac{1-C_{1}}{4})\|\delta\eta^{n+1}\|_{\kappa}^{2}\\
		\leq & \frac{1}{\Delta t}\sum_{i=1,2}\cfrac{\gamma}{2}\|\delta\eta_{i}^{n}\|^{2}+\int_{\Omega}(\cfrac{\kappa_{x}\kappa_{u}(P_{H}u^{n})}{2})|\nabla\eta^{n}|^{2}+(r_{1}^{n},\delta\eta_{1}^{n+1})+(r_{2}^{n},\delta\eta_{2}^{n+1})\\
		& +\Delta t D_1\|(\kappa_{x}\kappa_{u}(P_{H}u^{n+1}))^{\frac{1}{2}}\nabla\eta^{n+1}\|^2_{L^{2}}+F_{1}\|\eta^{n+1}\|^{2}+(\cfrac{1-C_{1}}{8})\|\delta\eta^{n+1}\|_{\kappa}^{2}.
	\end{align*}
	and thus,
	\begin{align*}
		& \sum_{i=1,2}\cfrac{1+\gamma}{4\Delta t}\|\delta\eta_{i}^{n+1}\|^{2}+(\cfrac{1-C_{1}}{8})\|\delta\eta^{n+1}\|_{\kappa}^{2}+\Big(1-D_1\Delta t\Big)\|\eta^{n+1}\|^2_{\kappa(u^{n+1})}\\
		\leq & \sum_{i=1,2}\cfrac{\gamma}{2\Delta t}\|\delta\eta_{i}^{n}\|^{2}+\|\eta^n\|^2_{\kappa(u^n)}+F_{1}\|\eta^{n+1}\|^{2}+(r_{1}^{n},\delta\eta_{1}^{n+1})+(r_{2}^{n},\delta\eta_{2}^{n+1})
	\end{align*}
 This gives the result in Lemma \ref{lem:error_a}. 

We will next discuss the proof of Lemma \ref{lem:error_L2}.
We consider $v=\eta_{1}^{n},\eta_{2}^{n}$ in (\ref{eq:residual_1}) and (\ref{eq:residual_2}),
	we obtain
	\begin{align*}
		(\cfrac{\delta\eta_{1}^{n+1}+\delta\eta_{2}^{n}}{\Delta t},\eta_{1}^{n})+(\cfrac{\delta\eta_{2}^{n+1}+\delta\eta_{1}^{n}}{\Delta t},\eta_{2}^{n})\\
		+a_{0}(\eta_{1}^{n+1}+\eta_{2}^{n},\eta^{n})+b(P_{H}u^{n},u_H^n,\eta^{n}) & =(r_{1}^{n},\eta_{1}^{n})+(r_{2}^{n},\eta_{2}^{n})
	\end{align*}
	
	First, we have 
	\begin{align*}
		& (\cfrac{\delta\eta_{1}^{n+1}+\delta\eta_{2}^{n}}{\Delta t},\eta_{1}^{n})+(\cfrac{\delta\eta_{2}^{n+1}+\delta\eta_{1}^{n}}{\Delta t},\eta_{2}^{n})
		= \sum_{i=1,2} (\cfrac{\delta\eta_{i}^{n+1}-\delta\eta_{i}^{n}}{\Delta t},\eta_{i}^{n})+(\cfrac{\delta\eta^{n}}{\Delta t},\eta^{n})\\
		= & \cfrac{1}{2\Delta t}\Big(\sum_{i=1,2}\Big((\|\eta_{i}^{n+1}\|^{2}-\|\eta_{i}^{n}\|^{2})-(\|\eta_{i}^{n}\|^{2}-\|\eta_{i}^{n-1}\|^{2})-(\|\delta\eta_{i}^{n+1}\|^{2}+\|\delta\eta_{i}^{n}\|^{2})\Big)\\
		& +\|\eta^{n}\|^{2}-\|\eta^{n-1}\|^{2}+\|\delta\eta^{n}\|^{2}\Big).
	\end{align*}
	
	We define $E_{n+1}(\eta):=\cfrac{1}{2\Delta t}\Big(\sum_{i=1,2}\Big((\|\eta_{i}^{n+1}\|^{2}-\|\eta_{i}^{n}\|^{2})+\|\eta^{n}\|^{2}\Big)$
	and simplify the equation as 
	\begin{align*}
		&(\cfrac{\delta\eta_{1}^{n+1}+\delta\eta_{2}^{n}}{\Delta t},\eta_{1}^{n})+(\cfrac{\delta\eta_{2}^{n+1}+\delta\eta_{1}^{n}}{\Delta t},\eta_{2}^{n})\\
		=&E_{n+1}(\eta)-E_{n}(\eta)+\cfrac{1}{2\Delta t}\Big(\|\delta\eta^{n}\|^{2}-\sum_{i=1,2}(\|\delta\eta_{i}^{n+1}\|^{2}+\|\delta\eta_{i}^{n}\|^{2})\Big).
	\end{align*}
	We next estimate the terms $a_{0}(\eta_{1}^{n+1}+\eta_{2}^{n},\eta^{n})+b(P_{H}u^{n},u_H^n,\eta^{n})$, which is 
	\begin{align*}
		&a_{0}(\eta_{1}^{n+1}+\eta_{2}^{n},\eta^{n})+\tilde{a}(P_{H}u^{n};P_{H}u^{n},\eta^{n})-\tilde{a}(u_{H}^{n};u_{H}^{n},\eta^{n})\\
		= & a_{0}(\eta_{1}^{n+1}+\eta_{2}^{n},\eta^{n})-a_{0}(\eta^{n},\eta^{n})+a(P_{H}u^{n};P_{H}u^{n},\eta^{n})-a(u_{H}^{n};u_{H}^{n},\eta^{n})\\
		= & a_{0}(\eta^{n+1},\eta^{n})-a_{0}(\delta\eta_{2}^{n+1},\eta^{n})-a_{0}(\eta^{n},\eta^{n})+a(P_{H}u^{n};P_{H}u^{n},\eta^{n})-a(u_{H}^{n};u_{H}^{n},\eta^{n}).
	\end{align*}
	where we use the property $a(P_{H}(v),w)=a(v,w),\;\forall w\in V_{H}$. For the first three terms in the above equation, we have
	\begin{align*}
		& a_{0}(\eta^{n+1},\eta^{n})-a_{0}(\delta\eta_{2}^{n+1},\eta^{n})-a_{0}(\eta^{n},\eta^{n})\\
		& =\cfrac{1}{2}\Big(\|\eta^{n+1}\|_{\kappa}^{2}-\|\eta^{n}\|_{\kappa}^{2}-\|\delta\eta^{n+1}\|_{\kappa}^{2}-2a_{0}(\delta\eta_{2}^{n+1},\eta^{n})\Big)\\
		& \geq\cfrac{1}{2}\Big(\|\eta^{n+1}\|_{\kappa}^{2}-(1+\cfrac{\underline{\kappa}_{u}}{2C_{u}})\|\eta^{n}\|_{\kappa}^{2}-\|\delta\eta^{n+1}\|_{\kappa}^{2}-C_{2}\|\delta\eta_{2}^{n+1}\|_{\kappa}^{2}\Big)
	\end{align*}
	where $C_{2}=\cfrac{2C_{u}}{\underline{\kappa}_{u}}$. And for the last two terms, we have 
	\begin{align*}
		& a(P_{H}u^{n};P_{H}u^{n},\eta^{n})-a(u_{H}^{n};u_{H}^{n},\eta^{n})\\
		= & \int_{\Omega}\kappa_{x}\Big(\kappa_{u}(P_{H}u^{n})-\kappa_{u}(u_{H}^{n})\Big)\nabla u_{H}^{n}\cdot\nabla\eta^{n}+\int_{\Omega}\kappa_{x}\kappa_{u}(P_{H}u^{n})|\nabla\eta^{n}|^{2}\\
		= & I_{2}+\int_{\Omega}\kappa_{x}\kappa_{u}(P_{H}u^{n})|\nabla\eta^{n}|^{2}
	\end{align*}
	where $I_{2}:=\int_{\Omega}\kappa_{x}\Big(\kappa_{u}(P_{H}u^{n})-\kappa_{u}(u_{H}^{n})\Big)\nabla u_{H}^{n}\cdot\nabla\eta^{n}$.
	We can show that 
	\begin{align*}
		|I_{2}| & \leq C_{u'}\int_{\Omega}\kappa_{x}|\eta^{n}||\nabla u_{H}^{n}||\nabla\eta^{n}|
		 \leq F_{2}\|\eta^{n}\|^{2}+\cfrac{\underline{\kappa}_{u}}{4C_{u}}\|\eta^{n}\|_{\kappa}^{2},
	\end{align*}
	where $F_{2}=\underline{\kappa}_{u}^{-2}C_{u}C_{0}^{2}C_{u'}^{2}.$
	Thus, we have 
	\begin{align*}
		& a_{0}(\eta_{1}^{n+1}+\eta_{2}^{n},\eta^{n})+\tilde{a}(P_{H}u^{n};P_{H}u^{n},\eta^{n})-\tilde{a}(u_{H}^{n};u_{H}^{n},\eta^{n})\\
		\geq & \cfrac{1}{2}\Big(\|\eta^{n+1}\|_{\kappa}^{2}-\cfrac{\underline{\kappa}_{u}}{4C_{u}}\|\eta^{n}\|_{\kappa}^{2}-\|\delta\eta^{n+1}\|_{\kappa}^{2}-C_{2}\|\delta\eta_{2}^{n+1}\|_{\kappa}^{2}\Big)\\&+\cfrac{1}{2}\int_{\Omega}\kappa_{x}\kappa_{u}(P_{H}u^{n})|\nabla\eta^{n}|^{2}+I_{2}\\
		\geq & \cfrac{1}{2}\Big(\|\eta^{n+1}\|_{\kappa}^{2}-\|\eta^{n}\|_{\kappa}^{2}-\|\delta\eta^{n+1}\|_{\kappa}^{2}-C_{2}\|\delta\eta_{2}^{n+1}\|_{\kappa}^{2}\Big)-F_{2}\|\eta^{n+1}\|^{2}\\
  &+\cfrac{1}{2}\int_{\Omega}\kappa_{x}\kappa_{u}(P_{H}u^{n})|\nabla\eta^{n}|^{2}
	\end{align*}
	
	Combining the equations, we obtain
	\begin{align*}
		& \Big(E_{n+1}(\eta)+\cfrac{1}{2}\|\eta^{n+1}\|_{\kappa}^{2}\Big)-\Big(E_{n}(\eta)+\cfrac{1}{2}\|\eta^{n}\|_{\kappa}^{2}\Big)+\cfrac{1}{2\Delta t}\|\delta\eta^{n}\|^{2} \\
  &+\cfrac{1}{2}\int_{\Omega}\kappa_{x}\kappa_{u}(P_{H}u^{n})|\nabla\eta^{n}|^{2}\\
		\leq & F_{2}\|\eta^{n+1}\|^{2}+\cfrac{1}{2}\Big(\|\delta\eta^{n+1}\|_{\kappa}^{2}+C_{2}\|\delta\eta_{2}^{n+1}\|_{\kappa}^{2}\Big)+\cfrac{1}{2\Delta t}\sum_{i=1,2}\Big(\|\delta\eta_{i}^{n+1}\|^{2}+\|\delta\eta_{i}^{n}\|^{2}\Big)\\
  &+(r_{1}^{n},\eta_{1}^{n})+(r_{2}^{n},\eta_{2}^{n}).
	\end{align*}
 This gives the result in Lemma \ref{lem:error_L2}. 
\end{document}